\documentclass[12pt]{amsart}

\setlength{\textheight}{23cm}
\setlength{\textwidth}{16cm}
\setlength{\topmargin}{-0.8cm}
\setlength{\parskip}{0.3\baselineskip}
\hoffset=-1.4cm

\newtheorem{theorem}{Theorem}[section]
\newtheorem{lemma}[theorem]{Lemma}
\newtheorem{corollary}[theorem]{Corollary}
\newtheorem{proposition}[theorem]{Proposition}
\newtheorem{remark}[theorem]{Remark}
\newtheorem{definition}[theorem]{Definition}

\newtheorem{Question}[theorem]{Question}
\newcommand{\ncom}{\newcommand}
\ncom{\rar}{\rightarrow}
\ncom{\lrar}{\longrightarrow}
\ncom{\ov}{\overline}
\ncom{\m}{\mbox}
\ncom{\sta}{\stackrel}
\ncom{\comx}{{\mathbb C}}
\ncom{\Z}{{\mathbb Z}}
\ncom{\Q}{{\mathbb Q}}
\ncom{\R}{{\mathbb R}}
\ncom{\G}{{\mathbb G}}
\ncom{\al}{\alpha}
\ncom{\p}{{\mathbb P}}
\ncom{\E}{{\mathbb E}}
\ncom{\N}{{\mathbb N}}
\ncom{\K}{{\mathbb K}}
\ncom{\Le}{{\mathbb L}}
\ncom{\A}{{\mathbb A}}
\ncom{\B}{{\mathbb B}}
\ncom{\F}{{\mathbb F}}
\ncom{\C}{{\mathbb C}}
\ncom{\f}{\frac}
\ncom{\cA}{{\mathcal A}}
\ncom{\cX}{{\mathcal X}}
\ncom{\cO}{{\mathcal O}}
\ncom{\cW}{{\mathcal W}}
\ncom{\cL}{{\mathcal L}}
\ncom{\cP}{{\mathcal P}}
\ncom{\cH}{{\mathcal H}}
\ncom{\cS}{{\mathcal S}}
\ncom{\cM}{{\mathcal M}}
\ncom{\cC}{{\mathcal C}}
\ncom{\cT}{{\mathcal T}}
\ncom{\cF}{{\mathcal F}}
\ncom{\cN}{{\mathcal N}}
\ncom{\cJ}{{\mathcal J}}
\ncom{\cV}{{\mathcal V}}
\ncom{\cZ}{{\mathcal Z}}
\ncom{\cU}{{\mathcal U}}
\ncom{\cSU}{{\mathcal S \mathcal U}}
\ncom{\cG}{{\mathcal G}}
\ncom{\cQ}{{\mathcal Q}}
\ncom{\cR}{{\mathcal R}}
\ncom{\cE}{{\mathcal E}}
\ncom{\cY}{{\mathcal Y}}
\input xy
\xyoption{all}

\begin{document}
\baselineskip=16pt

\title[Rationality and Chow--K\"unneth decompositions]{Rationality and
Chow--K\"unneth decompositions for some moduli stacks of curves    }

\author[J. N. Iyer]{Jaya NN Iyer}
\author[S. M\"uller--Stach]{Stefan M\"uller--Stach}
\address{The Institute of Mathematical Sciences, CIT
Campus, Taramani, Chennai 600113, India}
\address{Department of Mathematics and Statistics, University of Hyderabad, Gachibowli, Central University P O, Hyderabad-500046, India}
\email{jniyer@imsc.res.in}

\address{Mathematisches Institut der Johannes Gutenberg University\"at Mainz,
Staudingerweg 9, 55099 Mainz, Germany}
\email{mueller-stach@uni-mainz.de}

\footnotetext{This work is partly supported by 
Sonderforschungsbereich/Transregio 45.}

\footnotetext{Mathematics Classification Number: 14C25, 14D05, 14D20,
 14D21}
\footnotetext{Keywords: Moduli spaces, Chow groups, orthogonal
 projectors.}

\begin{abstract} In this paper we show the existence of a Chow--K\"unneth decomposition for the moduli
stack of stable curves with marked points $\ov\cM_{g,r}$, for low values of $g,r$.
We also look at the moduli space $\cR_{3,2}$ of double covers of genus three curves, branched along $4$ distinct points.
We first obtain a birational model of $\cR_{3,2}$ as a group quotient of a product of two Grassmannian varieties. This provides a Chow--K\"unneth decomposition over an open subset of $\cR_{3,2}$. The question of rationalilty of $\cR_{3,2}$ is also discussed.
\end{abstract}
\maketitle

%%%%%%%%%%%%%%%%%%%%%%%%%%%%%%%%%%%%%%%%%%%%%%%%%%%%%%%%%%%%%%%%%%%%%

\setcounter{tocdepth}{1}
\tableofcontents

\section{Introduction}
Suppose $X$ is a smooth projective variety of dimension $d$ over the complex numbers.
Let $CH^i(X)_\Q:=CH^i(X)\otimes \Q$ denote the rational Chow group of codimension $i$ algebraic cycles on $X$. One of the
important questions in the theory of algebraic cycles is to determine the structure of the rational Chow groups of $X$.
A conjecture of J. Murre \cite{Mu2}, \cite{Mu3} says
that the diagonal cycle $\Delta_X\subset X\times X$ has a splitting:
$$
\Delta_X=\oplus_{i=0}^{2d} \pi_i \,\in\,CH^d(X\times X)_\Q.
$$
Here $\pi_i$ are orthogonal projectors, for a ring structure on $CH^d(X\times X)_\Q$, and which lift the K\"unneth components of $\Delta_X$ in the rational Betti cohomology, see \S \ref{CK}. A decomposition as above is shown to yield a filtration of the rational Chow group by J. Murre.

The cases where a decomposition as above holds include curves, surfaces, uniruled threefolds, abelian varieties and some varieties with a nef tangent bundle \cite{Mu1}, \cite{Sh}, \cite{dA-Mu}, \cite{dA-Mu2},  \cite{Iy}. Some universal families over Shimura surfaces or other varieties have also been investigated, and a Chow--K\"unneth decomposition have been obtained in some cases, see
\cite{GHM2}, \cite{MM}.

In this paper, we continue our investigation on the Chow--K\"unneth decomposition for the moduli spaces of curves.
This is a sequel to \cite{Iy-Mu}, which included an introduction to the equivariant Chow motive for varieties with a group action. Since the moduli spaces $\cM_g$ for small $g\leq 9$ are known to be birationally isomorphic to a group quotient
of a homogeneous space, we could obtain a Chow--K\"unneth decomposition of $\cM_g$, at least over an open subset in the sense of definition \ref{CK-def}.
Here we would like to enlarge the class of examples, by looking at the moduli stacks of curves with marked points and also include the case of the stable moduli space. The other example is the moduli space of double covers of curves.
We investigate the moduli space $\cR_{3,2}$ \cite{BCV} of double covers of genus three curves branched over $4$ distinct
points, in some detail. The methods and results in this paper also extend to some other moduli spaces $\cR_{g,b}$, for small
$g$ and $b$.

Recall that the stable cohomology of $\cM_g$ is the part which is stable under various standard pullback maps, see \S \ref{stablecoh}. 
Alternatively, it is the cohomology of the limiting group $\Gamma_\infty$ of the various mapping class groups $\Gamma^s_g$, for a connected 
compact surface of genus $g$ and $s$ marked points  (for example see \cite{Ma-We}).

\begin{theorem} Suppose $\ov{\cM_{g,s}}$ denotes the moduli stack of stable curves of genus $g$ with $s$ marked points.
Then the following hold:

1) The stable rational cohomology has a Chow--K\"unneth decomposition.

2) The moduli stacks $\ov{\cM_{g,s}}$ have an explicit Chow--K\"unneth decomposition, if
$g=1, s\leq 3$ or $g=2, s\leq 2$ or  $g=3, s\leq 1$ or when $g=4, s=0$.
\end{theorem}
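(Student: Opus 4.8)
The plan is to treat the two parts separately, since part (1) concerns the (infinite-dimensional) stable cohomology while part (2) concerns the full Chow motive of a finite list of honest smooth proper Deligne--Mumford stacks. In both cases the guiding principle is the same: \emph{once the rational cohomology is concentrated in even degrees and is entirely spanned by algebraic classes, the K\"unneth projectors automatically lift to algebraic correspondences.}

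For part (1), I would first invoke the structure of the stable cohomology. By Harer stability together with the Madsen--Weiss theorem (see \cite{Ma-We}), the stable rational cohomology of $\ov{\cM_{g,s}}$ is generated by the Mumford--Morita--Miller classes $\kappa_i$ together with the marked-point classes $\psi_j$. All of these generators are algebraic tautological classes lying in \emph{even} degree, so in the stable range the cohomology is of Tate type: it vanishes in odd degrees and every class is algebraic. Consequently, for $g$ large and $\ast$ in the stable range the K\"unneth components $\pi_{2i}$ are represented by algebraic cycles, and the diagonal splits into mutually orthogonal algebraic projectors on the stable part, exactly as required.

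For part (2), the strategy is to reduce each listed pair $(g,s)$ to the Tate-type situation and then run the standard basis construction. Concretely, I would first appeal to the explicit computations of $H^{\ast}(\ov{\cM_{g,s}},\Q)$ available precisely in this low range, which show in each case that the odd cohomology vanishes and the even cohomology is spanned by algebraic classes (tautological classes together with boundary divisor classes and their products). Working in the category of Chow motives of smooth proper Deligne--Mumford stacks set up in the prequel \cite{Iy-Mu}, I would then choose algebraic cycles $Z^{(i)}_1,\dots,Z^{(i)}_{b_{2i}}$ whose classes form a basis of $H^{2i}$, use Poincar\'e duality to select a dual basis with respect to the intersection pairing, and set
$$
\pi_{2i} \;=\; \sum_j [Z^{(i)}_j]\times [Z^{(i)}_j]^{\vee} \;\in\; CH^{d}(\ov{\cM_{g,s}}\times \ov{\cM_{g,s}})_\Q,\qquad d=3g-3+s.
$$
A direct check shows these are orthogonal idempotents lifting the K\"unneth components and summing to the class of the diagonal, which is the desired explicit Chow--K\"unneth decomposition.

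The main obstacle is the input to part (2): the restriction to $g=1,s\le 3$; $g=2,s\le 2$; $g=3,s\le 1$; $g=4,s=0$ is dictated exactly by the range in which the rational cohomology of $\ov{\cM_{g,s}}$ is known to be of Tate type. Establishing this case by case---especially for $\ov{\cM_{3,1}}$ and $\ov{\cM_4}$, where the computation is most involved---is the substantive step, since for larger $(g,s)$ non-algebraic or odd-degree cohomology appears and the elementary projector construction breaks down. The remaining stacky subtleties, namely rational intersection theory and Poincar\'e duality for smooth proper Deligne--Mumford stacks, are handled by the equivariant machinery of \cite{Iy-Mu}.
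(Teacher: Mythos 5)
Your proposal follows essentially the same route as the paper: part (1) rests on Madsen--Weiss giving algebraic, even-degree stable cohomology, part (2) on the Getzler/Looijenga/Bergstr\"om--Tommasi computations showing $H^\ast(\ov{\cM_{g,s}},\Q)$ is purely algebraic in the listed range, and in both cases the projectors are built from dual bases under the intersection pairing, which is exactly the paper's Lemma on simple projectors. Two small refinements the paper makes that you elide: mutual orthogonality of the $\pi_{2i}$ is not quite a ``direct check'' but is arranged by diagonalizing the intersection matrix and a Gram--Schmidt correction, and for part (1) the paper's definition of a Chow--K\"unneth decomposition of the non-proper stack additionally requires extending the projectors to $\ov{\cM_g}$, which is done by taking closures of the $\kappa_i$ and applying the same lemma to the subalgebra they generate.
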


The key point used in the proof is that the moduli stacks as above have only algebraic cohomology. This enables us to construct orthogonal projectors.

The other class of examples include the moduli space of double covers. These spaces have attracted wide interest, with respect to the study of moduli of abelian varieties, and also regarding questions on rationality/unirationality.
We consider the moduli space $\cR_{3,2}$ studied by Bardelli-Ciliberto-Verra \cite{BCV}. This space parametrises data:
$(C,L,B)$, $C$ is a smooth connected projective curve of genus $3$, $L$ is a line bundle of degree $2$ on $C$ and $B$
is a divisor in the linear system $|L^2|$, consisting of distinct points. We first describe this moduli space as follows:

\begin{theorem}\label{uni}
The moduli space $\cR_{3,2}$ is birational to the group quotient of a product of Grassmannians $G(3,U^+)\times G(4,U^-)$, by a subgroup $H\subset SO(10)$.  Here $H$ is contained in the centraliser of the action of an involution $i$ on $SO(10)$. Moreover, there is an irreducible $16$-dimensional projective representation $U$ of $SO(10)$ and $U=U^+\oplus U^-$ is a splitting as $\pm$-eigenspaces for the involution $i$ acting on $U$.
We can write the birational equivalence as
$$
\cR_{3,2}\sim (G(3,U^+)\times G(4,U^-))/H.
$$
\end{theorem}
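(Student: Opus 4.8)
The plan is to realize a general point of $\cR_{3,2}$ as a genus $7$ curve equipped with an involution, and then to feed this through Mukai's description of canonical genus $7$ curves as linear sections of the spinor tenfold. Given $(C,L,B)\in\cR_{3,2}$ with $B\in|L^2|$ a reduced divisor of degree $4$, the line bundle $L$ determines a double cover $\pi\colon\tilde C\to C$ branched exactly along $B$; since $B$ is reduced, $\tilde C$ is smooth, and Riemann--Hurwitz gives $2g(\tilde C)-2 = 2(2\cdot 3-2)+4 = 12$, so $g(\tilde C)=7$. The deck involution $\sigma$ has quotient $C$ and four fixed points, and conversely such a pair $(\tilde C,\sigma)$ recovers $(C,L,B)$, so $\cR_{3,2}$ is birational to the moduli of genus $7$ curves carrying an involution of this numerical type. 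On global differentials $\sigma$ acts on $H^0(\tilde C,K_{\tilde C})=\pi^*H^0(C,K_C)\oplus H^0(C,K_C\otimes L)$, the invariant part having dimension $g(C)=3$ and the anti-invariant (Prym-canonical) part having dimension $h^0(K_C\otimes L)=4$.

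Next I would invoke Mukai's theorem: a general canonical curve of genus $7$ is the transverse intersection $\Sigma\cap\p^6$, where $\Sigma=OG(5,10)\subset\p^{15}=\p(U)$ is the $10$-dimensional spinor variety inside the projectivization of the $16$-dimensional half-spin module $U$ of $\mathrm{Spin}(10)$ (a projective representation of $SO(10)$). Thus the $\p^6$ corresponds to a $7$-dimensional subspace $W=H^0(\tilde C,K_{\tilde C})^*\subset U$, and $\cM_7$ is birational to $G(7,U)/\mathrm{Spin}(10)$ (dimension check: $63-45 = 18 = \dim\cM_7$). I would then transport $\sigma$ to the group side, realizing it through an involution $i$ of $\mathrm{Spin}(10)$ for which the half-spin module splits into eigenspaces $U=U^+\oplus U^-$; the $\sigma$-invariant $\p^6$ then becomes a pair $W=W^+\oplus W^-$ with $W^\pm=W\cap U^\pm$, and matching the differentials forces $\dim W^+=3$, $\dim W^-=4$, i.e. $(W^+,W^-)\in G(3,U^+)\times G(4,U^-)$.

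The eigenspace dimensions pin down the involution. I would take $i$ to be the involution attached to an orthogonal splitting $\C^{10}=\C^4\oplus\C^6$ of the vector representation; its centralizer in $SO(10)$ is $S(O(4)\times O(6))$, and under $\mathrm{Spin}(4)\times\mathrm{Spin}(6)$ the half-spin representation decomposes as $U=(S^+_4\otimes S^+_6)\oplus(S^-_4\otimes S^-_6)$, two summands of dimension $2\cdot 4 = 8$ each. Setting $U^+,U^-$ to be these summands and letting $H\subset S(O(4)\times O(6))$ be the subgroup of $\mathrm{Spin}(10)$ commuting with $i$ (which preserves the splitting and hence acts on $G(3,U^+)\times G(4,U^-)$), the quotient by $H$ reproduces the moduli of the pairs $(\tilde C,\sigma)$, giving the asserted birational equivalence. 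The dimension count is consistent: $\dim[(G(3,U^+)\times G(4,U^-))/H] = 3(8-3)+4(8-4)-\dim H = 15+16-21 = 10 = \dim\cR_{3,2}$, using $\dim H = \dim SO(4)+\dim SO(6) = 6+15 = 21$.

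The main obstacle is the compatibility underlying the middle steps: one must show that the geometric involution $\sigma$ on $\tilde C$ is genuinely induced by a group involution $i$ on $\mathrm{Spin}(10)$ under Mukai's embedding, that the spinor embedding is $\sigma$-equivariant, and that a general $i$-invariant $\p^6$ meets $\Sigma$ in a smooth genus $7$ curve of exactly the required type, so that the generic member of $\cR_{3,2}$ is Mukai-general and the correspondence is dominant with the expected fibres. Establishing this equivariance, together with the precise determination of $H$ inside the centralizer, is the crux, and is where I expect to rely on the detailed analysis of \cite{BCV}.
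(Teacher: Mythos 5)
Your overall strategy coincides with the paper's: pass from $(C,L,B)$ to the genus $7$ double cover with its deck involution, use Mukai's description of non-tetragonal genus $7$ curves as linear sections of the spinor tenfold $X_{10}\subset\p^{15}=\p(U_{16})$, transport the involution to the ambient space, and read off the point of $G(3,U^+)\times G(4,U^-)$ from the $3+4$ eigenspace decomposition of the canonical space. However, the three steps you flag at the end as ``the main obstacle'' are precisely the content of the paper's proof, and your proposal does not supply them. First, the equivariance is not obtained from \cite{BCV} but from Mukai's extension theorem (\cite[Theorem 3]{Mukai6}): any isomorphism between smooth curves cut out by linear sections of $X_{10}$ extends to an automorphism of $X_{10}\subset\p^{15}$; applied to $\sigma\colon C'\to C'$ this lifts the involution, and since $\m{Pic}(X_{10})\simeq\Z$ the lift fixes $\cO_{X_{10}}(1)$ and hence acts linearly on $U_{16}$, producing the splitting $U^+\oplus U^-$. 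Second, Mukai's theorem only applies to non-tetragonal curves, so one must prove that the generic member of $\cR_{3,2}$ is non-tetragonal; the paper does this with a separate lemma (a Clifford-theorem argument on $H^0(L\otimes i^*L)$ and its eigenspaces), and without it the whole construction could a priori miss $\cR_{3,2}$ entirely. Third, dominance of the resulting rational map onto $\cR_{3,2}$ (rather than onto a smaller locus of curves with involution) is established via Cornalba's classification of the singular locus of $\cM_7$, in which $\cR_{3,2}$ is a maximal component; you do not address this.

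Separately, your claim that ``the eigenspace dimensions pin down the involution'' as the one attached to an orthogonal splitting $\C^{10}=\C^4\oplus\C^6$, with $U^\pm$ the two $8$-dimensional summands and $H\subset S(O(4)\times O(6))$, is not justified by the dimension count you give: $\dim SO(7)=21$ as well, and the restriction of the half-spin representation to $Spin(7)$ gives two $8$-dimensional summands too, so the count is consistent with several inequivalent choices. The paper explicitly leaves the determination of $H$, of the eigenspace dimensions $(\dim U^+,\dim U^-)$, and of the $H$-modules $U^\pm$ as an open question, and only records these candidates as possibilities. You should either prove your identification of $i$ and $H$ (which would go beyond the paper) or weaken that part of your argument to match what the theorem actually asserts, namely that $H$ lies in the centralizer of $i$.
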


See \S \ref{birationalmodel}, for a proof.

As a consequence of this description, we obtain a Chow--K\"unneth decomposition for an open subset of $\cR_{3,2}$, see
Corollary \ref{CKopen}.

This description is similar to the descriptions obtained for the various moduli spaces $\cM_g$, for small $g\leq 9$, by Mukai and others (for example, see \cite{Mukai3}, \cite{Mukai6}).
The proof is by analyzing Mukai's description of the moduli space $\cM_7$ and restricting our attention to the sublocus
$\cR_{3,2}\subset \cM_7$. This sublocus is in the singular locus of $\cM_7$ and parametrises curves with an involution.
The involution plays a crucial role in determining the Grassmannian varieties, in the statement of Theorem \ref{uni}. We have been unable to determine explicitly the subgroup $H$ in the above theorem. This may be of independent interest and we pose this as a question, see \ref{question}.

In other direction, it is of wide interest to know when the moduli spaces are rational or unirational varieties. It is known from the results of Severi, Sernesi, Katsylo, Mukai, Dolgachev, Chang-Ran, Verra that the moduli spaces $\cM_g$, for small $g\leq 14$ are unirational \cite{Sernesi}, \cite{Katsylo}, \cite{Dolgachev}, \cite{Verra}, \cite{chen}.
Some moduli spaces of double covers have also been shown to be rational by Bardelli-Del Centina \cite{B-dC}, Izadi-Lo Giudice-Sankaran \cite{Izadi} and unirationality of $\cR_5$ is known. The above description
of $\cR_{3,2}$ in Theorem \ref{uni}, says that it is a unirational variety. We also describe $\cR_{3,2}$ as birationally equivalent to a $\p^1$-bundle over (an open subset) of the universal Picard scheme $\m{Pic}^2_{\cM_3}$ over the moduli space $\cM_3$. This gives us the following:

\begin{theorem}
The moduli space $\cR_{3,2}$ is a rational variety, if the variety $\m{Pic}^2_{\cM_3}$ is rational.
\end{theorem}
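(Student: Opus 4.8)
The strategy is to realise $\cR_{3,2}$ as a $\p^1$-bundle over $\m{Pic}^2_{\cM_3}$ and then to use that a projective bundle over a rational base is rational. First I would introduce the forgetful morphism
$$
\phi\colon \cR_{3,2}\lrar \m{Pic}^2_{\cM_3},\qquad (C,L,B)\longmapsto (C,L),
$$
which discards the branch divisor. The fibre of $\phi$ over a pair $(C,L)$ is the open subset of the complete linear system $|L^2|$ consisting of reduced divisors of four distinct points, so the decisive point is to compute this fibre. Since $\deg L^2=4=2g-2$ on a curve of genus three, Serre duality gives $h^1(L^2)=h^0(K_C\otimes L^{-2})$, and $K_C\otimes L^{-2}$ is a line bundle of degree zero, hence trivial exactly when $L^2\cong K_C$. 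For $(C,L)$ in general position this fails, so $h^1(L^2)=0$, and Riemann--Roch yields $h^0(L^2)=\deg L^2+1-g=2$. Thus $|L^2|=\p(H^0(C,L^2))\cong\p^1$, and the reduced $B$ form a dense open subset of this $\p^1$; in particular $\phi$ is dominant with generic fibre an open subset of $\p^1$.

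Next I would promote this fibrewise description to a global bundle structure over a dense open set $V\subset\m{Pic}^2_{\cM_3}$. Over $V$ one has a universal curve $\pi\colon\cC\to V$ carrying a Poincar\'e line bundle $\cL$ of relative degree two; forming the rank two vector bundle $E:=\pi_*\cL^2$, the relative linear system exhibits $\phi^{-1}(V)$ as a dense open subset of the projective bundle $\p(E)\to V$. Being the projectivisation of a vector bundle, $\p(E)$ is Zariski-locally trivial, hence birational to $V\times\p^1$, so that $\cR_{3,2}\sim V\times\p^1$. Now assume $\m{Pic}^2_{\cM_3}$ is rational, say of dimension $n$; then $V$ is rational, whence $V\times\p^1\sim\p^n\times\p^1\sim\p^{n+1}$ is rational, and therefore $\cR_{3,2}$ is rational.

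The step I expect to require the most care is the passage from the fibrewise to the global picture, namely the existence of the Poincar\'e bundle $\cL$ and hence of $E$. A universal line bundle need not be defined on the coarse Picard scheme, so a priori the conic bundle $\phi$ could carry a Brauer obstruction to being the projectivisation of a vector bundle. This obstruction in fact vanishes, because each fibre is genuinely the projectivisation of the vector space $H^0(C,L^2)$; concretely one restricts to the dense locus of automorphism-free curves, over which the universal curve and a Poincar\'e bundle exist, so that $E$ is defined and $\p(E)$ is Zariski-locally trivial there. Since only a birational statement is needed, shrinking $V$ in this way is harmless for the rationality conclusion.
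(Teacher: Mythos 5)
Your overall strategy coincides with the paper's (Lemma \ref{rationalJ} together with Corollaries \ref{rationalF} and \ref{doublerational}): identify the fibre of the forgetful map over $(C,L)$ with a dense open subset of $|L^2|\cong\p^1$ via Riemann--Roch, globalise this to a $\p^1$-bundle of the form $\p(\pi_*\cL^2)$ over a dense open subset of $\m{Pic}^2_{\cM_3}$, and deduce rationality of the total space from rationality of the base. Your fibre computation ($h^1(L^2)=h^0(K_C\otimes L^{-2})=0$ away from the theta-characteristics, hence $h^0(L^2)=2$) is correct and in fact more explicit than what the paper writes.

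The gap is precisely in the step you flag as delicate. The assertion that the Brauer obstruction vanishes ``because each fibre is genuinely the projectivisation of the vector space $H^0(C,L^2)$'' is not an argument --- every Brauer--Severi scheme has that property fibrewise --- and the concrete justification you offer, restricting to the automorphism-free locus, does not work. The obstruction to a Poincar\'e bundle on $\cC\times_{\cM_3}\m{Pic}^2_{\cM_3}$ has nothing to do with automorphisms of curves: it comes from the absence of a section of the universal curve $\cC\rar\cM_3$, and by the criterion of Mestrano--Ramanan a Poincar\'e bundle for $\m{Pic}^d$ of the universal genus-$g$ curve exists if and only if $\gcd(d-g+1,2g-2)=1$, which fails here since $\gcd(0,4)=4$. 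Moreover, because the Brauer group of a smooth variety injects into that of its function field, a nonzero class is not killed by passing to a dense open subset, so ``shrinking $V$'' is not harmless for this particular point. What actually has to be checked is that the class of the specific $\p^1$-bundle $\p(\pi_*\cL^2)$ --- the square of the obstruction class, since one pushes forward $\cL^{\otimes 2}$ rather than $\cL$ --- vanishes, i.e.\ that $\pi_*\cL^2$ descends to an honest rank-two vector bundle on $\m{Pic}^2_{\cM_3}$. The paper's device for this is to pass to $\cM_{3,1}$, where the marked point furnishes a section of the universal curve and hence a genuine Poincar\'e bundle $\cL$ on $\cC\times_{\cM_{3,1}}\m{Pic}^2_{\cM_{3,1}}$, and then to descend the rank-two sheaf $q_*\cL^2$ to $\m{Pic}^2_{\cM_3}$ along the forgetful map $h$. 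You need some such mechanism (or a direct proof that the relevant $2$-torsion Brauer class vanishes) to close your argument; the rest of your proposal then goes through as written.
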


See Corollary \ref{doublerational}, and Remark \ref{remverra} for the question of rationality of  $\m{Pic}^2_{\cM_3}$. 

{\Small
Acknowledgements: This work is a sequel to \cite{Iy-Mu} (preprint Oct. 2007) which looked at the question of providing Chow--K\"unneth decomposition for some moduli spaces of curves of small genus. This required us to introduce equivariant Chow K\"unneth projectors and equivariant Chow motive. The first named author acknowledges and thanks the Women in Mathematics Program on 'Algebraic geometry and Group Actions' in May 2007, at IAS Princeton, where the equivariant cohomology theory and equivariant objects were discussed. She also thanks the Maths Department at Mainz, for their hospitality and support in June 2008 when this work was partly done. We also thank A. Verra for interesting communications and comments and B. Totaro for pointing out some errors and making suggestions. 
}

\section{Chow-K\"unneth decompositions for $\ov\cM_{g,r}$, for small $g,r$}

\subsection{Category of motives}\label{CK}
The category of nonsingular projective varieties over $\comx$ will be
denoted by $\cV$. Let $CH^i(X)_\Q:=CH^i(X)\otimes \Q$ denote the rational Chow group of
codimension $i$ algebraic cycles modulo rational equivalence. We look into the category of motives $\cM_\sim$, where $\sim$ is any adequate equivalence. For instance $\sim$ is homological or numerical equivalence.

Suppose $X$ is a smooth projective variety over $\comx$ of dimension $n$. Let $\Delta_X$ be the diagonal in $X\times X$. Consider the K\"unneth decomposition of $\Delta_X$ in the Betti cohomology:
$$
\Delta_X= \oplus_{i=0}^{2n}\pi_i^{hom}
$$
where $\pi_i^{hom}\in H^{2n-i}(X)\otimes H^i(X)$.

The motive of $X$ is said to have a \textit{K\"unneth decomposition} if each of
the classes $\pi_i^{hom}$ are algebraic, i.e., $\pi_i^{hom}$ is the image of an algebraic cycle $\pi_i$, which add to the diagonal cycle,
under the cycle class map from the rational Chow groups to the Betti cohomology.
Furthermore, the motive of $X$ is said to have a \textit{Chow--K\"unneth decomposition} if
each of the
classes $\pi_i^{hom}$ is algebraic and are orthogonal projectors,
 i.e.,
$\pi_i\circ \pi_j=\delta_{i,j}\pi_i$ and which add to the diagonal cycle $\Delta_X$ in $CH^n(X\times X)_\Q$. Here $\circ$ denotes the ring structure on
$CH^n(X\times X)_\Q$.

In \cite{Iy-Mu}, we showed explicit Chow--K\"unneth projectors for the universal curve over suitable open subsets of the moduli space of smooth curves $\cM_g$, when $g\leq 9$. In this paper, we are interested at looking at the Chow--K\"unneth decompositions for the moduli spaces of stable curves $\ov{\cM_{g,s}}$. These spaces are normal projective varieties and have singularities. So it is convenient to consider them as the Deligne--Mumford stacks (henceforth termed as DM-Stacks) which are smooth stacks. For this purpose, we define a Chow--K\"unneth decomposition for DM--stacks.

\subsection{Motives of Deligne--Mumford stacks}

Suppose $\cX$ is a smooth DM-stack with the projection $p:\cX\rar X$ to its coarse moduli space $X$.

Mumford, Gillet (\cite{Mumford},\cite{Gi}) have defined Chow groups for DM-stacks.
So from \cite[Theorem 6.8]{Gi}, the pullback $p^*$ and pushforward maps $p_*$ establish a
ring isomorphism of the rational Chow groups

\begin{equation}\label{ringiso}
CH^{\ast}(\cX)_{\Q }\cong CH^{\ast}(X)_{\Q }.
\end{equation}

This can be applied to the product $p\times p:\cX\times \cX\rar X\times X$, to get a ring isomorphism
\begin{equation}\label{ringiso2}
CH^{\ast}(\cX\times \cX)_{\Q }\cong CH^{\ast}(X\times X)_{\Q }.
\end{equation}

These isomorphisms also hold in the rational singular cohomology of $\cX$ and $\cX\times \cX$ (for example see \cite{Behrend}):
\begin{equation}\label{cringiso}
H^{\ast}(\cX,\Q ) \cong H^{\ast}(X,\Q ).
\end{equation}
\begin{equation}\label{cringiso2}
H^{\ast}(\cX\times \cX,\Q )\cong H^{\ast}(X\times X,\Q ).
\end{equation}
 Assume that $X$ is a projective variety of dimension $n$. Via the isomorphisms in the cohomology, we can pullback the K\"unneth decomposition of $\Delta_X$ in $H^{2n}(X\times X,\Q)$ to a decomposition
of $\Delta_{\cX}$ in $H^{2n}(\cX\times \cX,\Q)$, whose components we refer to as the K\"unneth components of $\cX$.

Consider the diagonal substack $\Delta_\cX$ in $\cX\times \cX$. Then we can write
$$
\Delta_\cX = \oplus_{i=0}^{2n}\pi_i^{hom}
$$
where $\pi_i^{hom}\in H^{2n-i}(\cX)\otimes H^i(\cX)$.

The motive of $\cX$ is said to have a \textit{K\"unneth decomposition} if each of
the classes $\pi_i^{hom}$ are algebraic, i.e., $\pi_i^{hom}$ is the
 image of an algebraic cycle $\pi_i$ which add to the Chow diagonal cycle,
under the cycle class map from the rational Chow groups to the Betti
 cohomology of $\cX\times \cX$.
Furthermore, the motive of $X$ is said to have a \textit{Chow--K\"unneth decomposition} if
 each of the
classes $\pi_i^{hom}$ is algebraic and are orthogonal projectors,
 i.e.,
$\pi_i\circ \pi_j=\delta_{i,j}\pi_i$, which add to the diagonal cycle $\Delta_\cX$ in $CH^n(\cX\times \cX)_\Q$ . Here $\circ$ is the ring structure on
$CH^n(\cX\times \cX)_\Q$, defined in the same way when $\cX$ is a smooth projective variety.
% When the coarse moduli space $X$ is quasi--projective and $\cX$ is a smooth stack, we can take the bottom weight cohomology $W_iH^i(\cX):= W_iH^i(X)$ via the isomorphism in \eqref{cringiso}  (see also \cite[\S 4]{Iy-Mu}) instead of the cohomology $H^i(\cX)$ in the above discussion and extend the definition of a Chow--K\"unneth decomposition. More precisely,
% denote the decomposition of the diagonal cycle
% \begin{equation}\label{equiprojectors}
% \Delta_\cX= \oplus_{i=0}^{2d}\pi_i \,\in \,W_{2n}H^{2n}(\cX\times \cX,\Q)
% \end{equation}
% such that $\pi_i$ lies in the space $W_{2n-i}H^{2n-i}(\cX)\otimes
%  W_iH^i(\cX)$. Here $W_iH^j(\cX\times \cX,\Q):=W_iH^j(X\times X,\Q)$, via the isomorphism in \eqref{cringiso2}.
We extend the notion of orthogonal projectors on a smooth stack, as follows.

\begin{definition}\label{CK-def}
Suppose $\cX$ is a smooth DM-stack with a quasi-projective coarse moduli space $X$.
The motive $(\cX,\Delta_\cX)$ of $\cX$ is said to have
a $\textbf{K\"unneth decomposition}$ if the classes
 $\pi_i$ are
algebraic, i.e., they have a lift in the Chow group
 $CH^n(\cX\times \cX)_\Q$ and add to the Chow diagonal class. Furthermore, if $\cX$ admits a smooth compactification
 $\cX\subset \ov{\cX}$ such that the K\"unneth projectors extend
to orthogonal projectors on $\ov{\cX}$ then we say that $\cX$ has a
 \textbf{Chow--K\"unneth decomposition}.
\end{definition}

We recall the following lemma from \cite{Iy-Mu}, which also applies for smooth stacks.

\begin{lemma}\label{simpleprojectors}
Suppose $\cY$ is a smooth DM-stack whose coarse moduli space is projective of dimension $n$ over
 $\comx$. Let $H^*(\cA)$ be the subalgebra of the cohomology algebra $H^*(\cY,\Q)$
consisting of only algebraic classes. Denote the graded pieces of $H^*(\cA)$ by $H^i(\cA)$ for all $0 \le i \le m$, for some $m < n$.
Then we can construct orthogonal projectors
$$
\pi_0,\pi_1,...,\pi_m,\pi_{2n-m},\pi_{2n-m+1},...,\pi_{2n}
$$
in the usual Chow group $CH^n(\cY\times \cY)_\Q$, and where $\pi_{2i}$ acts
 as $\delta_{i,p}$ on $H^{2p}(\cA)$ and $\pi_{2i-1}=0$.
\end{lemma}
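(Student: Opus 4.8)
The plan is to reproduce the classical dual-basis construction of Künneth projectors, carried out on the coarse moduli space $X$ (a projective $\Q$-variety, hence a rational homology manifold, so that Poincar\'e duality and hard Lefschetz hold for its rational cohomology) and transported to the stack $\cY$ via the ring isomorphisms (\ref{ringiso2}) and the cohomology isomorphisms (\ref{cringiso}), (\ref{cringiso2}). First I would fix, for each $p$ with $2p\le m$, the hypothesis that $H^{2p}(\cY,\Q)=H^{2p}(\cA)$ is spanned by algebraic classes, and choose algebraic cycles $Z_{p,1},\dots,Z_{p,r_p}$ of codimension $p$ representing a basis. Applying hard Lefschetz with the \emph{algebraic} Lefschetz operator $L=\,\cup\, h$ (where $h$ is a hyperplane class), the isomorphism $L^{n-2p}\colon H^{2p}\xrightarrow{\sim} H^{2n-2p}$ carries the algebraic basis to the algebraic classes $h^{n-2p}\cup[Z_{p,\alpha}]$, so $H^{2n-2p}(\cY,\Q)$ is also spanned by algebraic classes. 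Since the cup-product pairing $H^{2p}\times H^{2n-2p}\to\Q$ is perfect, I may then pick algebraic cycles $W_{p,1},\dots,W_{p,r_p}$ of codimension $n-p$ whose classes form the dual basis, $\langle [W_{p,\alpha}],[Z_{p,\beta}]\rangle=\delta_{\alpha\beta}$.

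With these in hand I set, in $CH^n(\cY\times\cY)_\Q$ (interpreted through (\ref{ringiso2})),
$$
\pi_{2p}:=\sum_{\alpha}[W_{p,\alpha}\times Z_{p,\alpha}],\qquad \pi_{2n-2p}:={}^{t}\pi_{2p}=\sum_{\alpha}[Z_{p,\alpha}\times W_{p,\alpha}],
$$
and $\pi_{2i-1}:=0$ for the odd indices in both ranges. The vanishing of the odd projectors is consistent because in the degrees $\le m$ and $\ge 2n-m$ the cohomology is purely algebraic, hence carries no odd classes.

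To verify the defining properties I would use the composition rule $(c\times d)\circ(a\times b)=\deg(b\cdot c)\,(a\times d)$. Computing $\pi_{2q}\circ\pi_{2p}$, a term survives only when $Z_{p,\alpha}$ (codimension $p$) meets $W_{q,\beta}$ (codimension $n-q$) in a zero-cycle, forcing $p=q$, and then $\deg([Z_{p,\alpha}]\cdot[W_{p,\beta}])=\delta_{\alpha\beta}$ by the dual-basis normalization; hence $\pi_{2p}\circ\pi_{2p}=\pi_{2p}$ and $\pi_{2p}\circ\pi_{2q}=0$ for $p\ne q$, and the transposes behave symmetrically. For the cross terms between the low and high ranges the relevant intersection product lands in codimension $p+q$ with $p,q\le m/2<n/2$, so $p+q<n$ and its degree vanishes; this is precisely where the hypothesis $m<n$ enters. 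Finally, reading the action off through (\ref{cringiso}), the correspondence $\pi_{2p}$ sends $\xi\in H^{*}(\cY,\Q)$ to $\sum_{\alpha}\langle\xi,[W_{p,\alpha}]\rangle[Z_{p,\alpha}]$, which is $\xi$ on $H^{2p}(\cA)$ and $0$ on $H^{2p'}(\cA)$ for $p'\ne p$, i.e. $\pi_{2i}$ acts as $\delta_{i,p}$ on $H^{2p}(\cA)$; the transposed projectors act correctly in the dual degrees and the odd projectors act as zero.

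The algebra here is routine; the main obstacle is the stacky and algebraicity bookkeeping. I must guarantee that the dual basis is represented by \emph{honest algebraic cycles} on $\cY$, not merely by cohomology classes, and this is exactly what the algebraic hard Lefschetz step supplies, since bare Poincar\'e duality only produces a dual basis in cohomology. I also need the composition and transpose formulas for correspondences, together with all the intersection numbers, to be valid in the Gillet--Mumford Chow theory of DM-stacks; this is ensured by the ring isomorphism (\ref{ringiso2}) with $X\times X$ and the compatibility (\ref{cringiso2}) of the cycle class map, so that every computation may be transported downstairs to the projective variety $X\times X$ and carried out there.
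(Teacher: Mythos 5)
Your construction is in essence the same as the paper's: both proofs build $\pi_{2p}$ as a sum of product correspondences $W\times Z$ with $Z$ of codimension $p$ and $W$ of codimension $n-p$ forming dual bases under the intersection pairing, take transposes for the complementary degrees, and set the odd projectors to zero. The differences are in two steps. First, where the paper simply posits generating cycles $D_1,\ldots,D_s$ in the complementary degree and \emph{asserts} that the cup product $H^{2p}(\cA)\otimes H^{2n-2p}(\cA)\to\Q$ is non-degenerate, you derive the existence of a dual basis of honest algebraic cycles from hard Lefschetz applied to the algebraic classes $h^{n-2p}\cup[Z_{p,\alpha}]$. This genuinely fills in a point the paper leaves implicit, but it is purchased by quietly strengthening the hypothesis to $H^{2p}(\cY,\Q)=H^{2p}(\cA)$ for $2p\le m$: the lemma as stated allows $H^*(\cA)$ to be a \emph{proper} subalgebra of algebraic classes (and this is how it is used for the stable cohomology of $\cM_g$, where one takes the subalgebra generated by the $\kappa_i$), in which case the restriction of the Lefschetz pairing to $H^{2p}(\cA)$ is not automatically non-degenerate and your argument, like the paper's, does not literally establish it. In all of the paper's actual applications the cohomology in the relevant range is entirely algebraic, so your version suffices there, but you should state the extra assumption explicitly. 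Second, you obtain mutual orthogonality directly from the dual-basis normalization together with the codimension count $p+q<n$ (this is where $m<n$ enters), whereas the paper diagonalizes the intersection matrix and then invokes Gram--Schmidt; your route is cleaner and makes the role of $m<n$ transparent. Your bookkeeping through the coarse moduli space via the Gillet isomorphisms, and the remark that Poincar\'e duality and hard Lefschetz hold rationally because $X$ has finite quotient singularities, are correct and slightly more careful than the original.
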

\begin{proof}
See \cite[Lemma 5.2]{Iy-Mu}, when $H^*(\cA)=H^*(\cY,\Q)$. The same arguments also hold for the subalgebra $H^*(\cA)=\oplus_{p\geq 0} H^{2p}(\cA)$. Indeed, let $H^{2p}(\cA)$ be
 generated by cohomology classes of cycles $C_1,\ldots,C_s$ and $H^{2r-2p}(\cA)$
  be generated by cohomology classes of cycles $D_1,\ldots,D_s$. We
 denote by $M$ the intersection matrix with entries
$$
M_{ij}= C_i \cdot D_j \in \Z.
$$ 
After base change and passing to $\Q$--coefficients we may assume that
 $M$ is diagonal, 
since the cup--product $H^{2p}(\cA) \otimes H^{2r-2p}(\cA) \to \Q$ is
 non--degenerate. We define the projector $\pi_{2p}$ as 
$$
\pi_{2p}=\sum_{k=1}^s \frac{1}{M_{kk}} D_k \times C_k. 
$$
It is easy to check that $\pi_{2p\,*}(C_k)=D_k$. Define $\pi_{2r-2p}$
 as the adjoint, i.e., transpose of $\pi_{2p}$.
Via the Gram--Schmidt process from linear algebra we can successively
 make all projectors orthogonal.
\end{proof}

\subsection{The stable cohomology of ${\cM_{g}}$}\label{stablecoh}

In this subsection, we recall some results on the stable cohomology of the moduli spaces \cite{Ha},\cite{Lo}. Our aim will be to show the existence of a Chow--K\"unneth decomposition for the stable cohomology.

Denote $S_g$,  compact connected oriented surface of genus $g$ with $s$ marked points.
Let $\Gamma^s_g$ denote the mapping class group, the connected component of identity of the group of orientation preserving diffeomorphisms of $S_g$.
J. Harer \cite{Ha} has proved a stability theorem which essentially says that the cohomology group $H^k(\Gamma^s_g,\Z)$ only depends on $s$ if $g$ is large compared to $s$. We would like to state it in more geometric terms and fix some notations below. Denote $u_i\in H^2(\Gamma^s_g,\Z)$ for the first Chern class.

Fix a finite ordered set $S$ of cardinality $s$. We denote by $\cC^S_g$ the moduli space of pairs $(C,x)$ where $C$ is a compact Riemann surface of genus $g$ and $x:S\rar C$ is a map. Let $j:\cM^S_g\subset \cC^S_G$ be the open subset defined by the condition that $x$ be injective. In other words, $\cM^S_g$ is the moduli space   of smooth curves with $s$ marked points.

Now $\cM^S_g$ (resp. $\cM_g$) is a virtual classifying space of $\Gamma^s_g$ (resp. $\Gamma_g$). In particular $\Gamma^s_g$ and $\cM^S_g$ have the same rational cohomology. Let $\cC_g$ be the universal curve and denote by $\theta$ its relative tangent sheaf. For each $i\in S$, the map $(C,x)\mapsto x(i)$ defines a projection $\cC^S\rar \cC_g$; denote by $\theta_i$ the pullback of $\theta$ under this map.

\begin{proposition}
The ring homomorphism
$$
\psi^S_g:H^\bullet(\cM_g,\Q)[u_i:i\in S]\rar H^\bullet(\cM^S_g,\Q),\,u_i\mapsto c_1(\theta_i)_{|\cM^S_g}
$$
is an isomorphism in degree $\leq N(g)$.
\end{proposition}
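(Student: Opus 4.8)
The plan is to reduce the statement to a computation of stable cohomology and then import Looijenga's computation of the cohomology of $\cM_g$ with symplectic coefficients, running an induction on $s=|S|$ along the point-forgetting fibrations. This identifies the proposition as essentially a theorem of Looijenga \cite{Lo}.

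First I would invoke Harer's stability theorem \cite{Ha} to make precise the range ``degree $\leq N(g)$'': for each fixed cohomological degree $k$ the relevant forgetful and restriction maps are isomorphisms as soon as $g$ is large compared to $k$ and $s$, so that $N(g)$ may be taken to be the Harer stability bound. It therefore suffices to prove that $\psi^S_g$ induces an isomorphism of stable cohomology rings. Both factors of the source are stable: $H^\bullet(\cM_g,\Q)$ is stable by Harer, and each $u_i=c_1(\theta_i)$ is a stable tautological class, so $\psi^S_g$ is a well-defined ring homomorphism in the stable range, and one checks its multiplicativity directly.

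Next I would argue by induction on $s$, the case $s=0$ being trivial. For the inductive step set $S'=S\setminus\{i\}$ and consider the fibration $f\colon \cM^S_g\rar \cM^{S'}_g$ that forgets the $i$-th point, whose fibre over $(C,x')$ is the punctured surface $C\setminus x'(S')$. I would analyse the Leray--Serre spectral sequence $E_2^{p,q}=H^p(\cM^{S'}_g, R^q f_*\Q)\Rightarrow H^{p+q}(\cM^S_g)$. Here $R^0 f_*\Q=\Q$ and $R^q f_*\Q=0$ for $q\geq 2$, while $R^1 f_*\Q$ is an extension of a trivial local system (generated by the classes of the marked points cut out on the fibre) by the standard symplectic local system $\cV=H^1(C)$. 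The essential input is then Looijenga's computation \cite{Lo} of the stable cohomology $H^\bullet(\cM_g,\cV^{\otimes n})$ as a module over $H^\bullet(\cM_g)=\Q[\kappa_1,\kappa_2,\dots]$, which, together with the inductive hypothesis for the base, determines the $E_2$-page.

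Feeding this into the spectral sequence and using its multiplicativity, I would show that in the stable range it degenerates and that the symplectic summands conspire to contribute exactly the powers $u_i^k$ and nothing else, giving $H^\bullet(\cM^S_g)\cong H^\bullet(\cM^{S'}_g)[u_i]\cong H^\bullet(\cM_g)[u_j:j\in S]$ stably; this is the surjectivity of $\psi^S_g$. Injectivity, that is, the algebraic independence of the monomials in the $\kappa$'s and the $u_j$'s, I would obtain by iterated fibre integration, computing the classes $f_*(u_i^{m+1})$ and matching them with independent tautological classes on the base. The step I expect to be the main obstacle is precisely the control of the twisted stable cohomology $H^\bullet(\cM_g,\cV^{\otimes n})$ together with the degeneration of the spectral sequence: a priori the symplectic parts of the fibre cohomology could produce stable classes outside the tautological ring, and ruling this out is the real content, supplied by Looijenga's theorem.
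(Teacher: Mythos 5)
The paper offers no argument here: its ``proof'' is the single line ``See [Lo, Proposition 2.2]'', so there is no internal argument to match yours against. Judged on its own terms, your sketch has a genuine logical problem. The key input you invoke --- Looijenga's computation of the stable cohomology $H^\bullet(\cM_g,\cV^{\otimes n})$ with symplectic coefficients --- is, in the very paper being cited, a \emph{consequence} of Proposition 2.2 and Theorem 2.3: Looijenga first establishes the untwisted statements $H^\bullet(\cM^S_g)\cong H^\bullet(\cM_g)[u_i]$ and the description of $H^\bullet(\cC^S_g)$, and only then extracts the twisted stable cohomology by decomposing the fibre cohomology of $\cC^S_g\rar\cM_g$ into symplectic isotypic pieces. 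Using that computation to prove Proposition 2.2 is therefore circular relative to the cited source (one could substitute an independent proof of the twisted computation, but you would have to say which). Moreover the two assertions you yourself identify as the crux --- that the Leray spectral sequence of the point-forgetting map degenerates stably, and that the symplectic summands ``conspire to contribute exactly the powers $u_i^k$ and nothing else'' --- are exactly where the content lies, and in your write-up they are announced rather than argued.

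The standard proof (Harer's, reproduced by Looijenga) is both more elementary and avoids twisted coefficients altogether. One compares the mapping class group $\Gamma^s_g$ of a surface with $s$ punctures to the group $\Gamma_{g,s}$ of a surface with $s$ boundary components: the Dehn twists about the boundary circles generate a central $\Z^s$, giving a central extension $\Z^s\rar\Gamma_{g,s}\rar\Gamma^s_g$, i.e.\ geometrically a torus bundle whose Euler classes are precisely the $u_i=c_1(\theta_i)$. Harer's stability theorem with \emph{trivial} coefficients gives $H^\bullet(\Gamma_{g,s},\Q)\cong H^\bullet(\Gamma_g,\Q)$ in the stable range, and the Gysin sequence of the torus bundle (together with the fact that the $u_i$ are stably non-zero-divisors, proved by induction on $s$) then yields $H^\bullet(\Gamma^s_g,\Q)\cong H^\bullet(\Gamma_g,\Q)[u_1,\dots,u_s]$, which is exactly the statement about $\psi^S_g$. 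If you want a self-contained argument, this is the route to take; your Leray-spectral-sequence approach can be made to work, but only after independently controlling $H^\bullet(\Gamma_g,\cV)$ stably, which is a strictly harder theorem than the one you are trying to prove.
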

\begin{proof}
See \cite[Proposition 2.2]{Lo}.
\end{proof}

Similarly, the rational cohomology of $\cC_g^S$ is expressed in terms of that of $\cM_g$, $u_i$ and the multi-diagonal classes $\cC_g(P_I)$, where $P_I$ is a partition of $S$ whose parts are $I$ and the singletons in $S-I$.

  More formally, we consider the $\Q[u_i:i\in S]$-algebra $A^\bullet_S$ generated by $a_I$, where $I$ runs over the subsets of $S$ with at least two elements.
These generators satisfy the relations
\begin{eqnarray*}
u_ia_I & := & u_ja_I, \mbox{ if }i,j\in I \\
a_Ia_J & := & u_i^{|I\cap J|-1}, \mbox{ if } i\in I\cap J.
\end{eqnarray*}
For every partition $P$ of $S$ put $a_P:=\Pi_{I\in P:|I|\geq 2}a_I$, with the convention that $a_P=1$ if $P$ is a partition into singletons.

Then we have
\begin{proposition}
There is an algebra homomorphism
$$
\phi^S_g:H^\bullet(\cM_g,\Q)\otimes A^\bullet_S\rar H^\bullet(\cC^S_g,\Q)
$$
that extends the natural homomorphism $H^\bullet(\cM_g,\Q)\rar H^\bullet(\cC^S_g,Q)$, and sends $1\otimes u_i\mapsto c_1(\theta_i),\,1\otimes a_I\mapsto \cC_g(P_I)$ (the Poincar\'e dual class). This is a $\Sigma_s$-equivariant homomorphism, and a morphism of mixed Hodge structures. Moreover, $\phi^S_g$ is an isomorphism
in degrees $\leq N(g)$.
\end{proposition}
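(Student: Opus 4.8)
The plan is to construct $\phi^S_g$ by checking that the prescribed images satisfy the defining relations of $A^\bullet_S$, and then to prove bijectivity in the stable range through the Leray spectral sequence of the fibre power $\pi:\cC^S_g\to \cM_g$, whose fibre is the $s$-fold product $C^s$. \textbf{Construction and well-definedness.} To obtain an algebra homomorphism it suffices to send $u_i\mapsto c_1(\theta_i)$ and $a_I\mapsto \cC_g(P_I)$ and to verify the two families of relations in the target. The first relation $u_ia_I=u_ja_I$ (for $i,j\in I$) holds because on the coincidence locus $\cC_g(P_I)$, where the points indexed by $I$ are forced to agree, the line bundles $\theta_i$ and $\theta_j$ are canonically identified, so $c_1(\theta_i)$ and $c_1(\theta_j)$ have the same product with $[\cC_g(P_I)]$. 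The second is the excess-intersection formula
$$
\cC_g(P_I)\cdot \cC_g(P_J)=c_1(\theta_i)^{|I\cap J|-1}\,\cC_g(P_{I\cup J}),\qquad i\in I\cap J,
$$
the two loci meeting along $\cC_g(P_{I\cup J})$ with excess normal bundle a sum of copies of the vertical tangent line at the common point, whose Chern class is $u_i$. Granting these, $\phi^S_g$ is well defined, restricts to the pullback on $H^\bullet(\cM_g)\otimes 1$, and realizes the stated assignments. Equivariance and the Hodge property are then formal: $\Sigma_s$ permutes the $u_i,a_I$ exactly as it permutes the $c_1(\theta_i),\cC_g(P_I)$, and each generator of $A^\bullet_S$ maps to the class of an algebraic cycle, hence of Tate type and of the correct weight, so $\phi^S_g$ is $\Sigma_s$-equivariant and a morphism of mixed Hodge structures.

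\textbf{Bijectivity in the stable range.} I would run the Leray spectral sequence $E_2^{p,q}=H^p(\cM_g,R^q\pi_*\Q)\Rightarrow H^{p+q}(\cC^S_g)$, with $R^q\pi_*\Q=H^q(C^s)$, and decompose the fibre cohomology $H^\bullet(C^s)=(H^\bullet C)^{\otimes s}$ into isotypic components for the standard symplectic local system $\mathbb{V}$ on $\cM_g$, via Schur--Weyl. The trivial component is the invariant part $H^\bullet(C^s)^{Sp}$, which by the first and second fundamental theorems of invariant theory for $Sp_{2g}$ is, in the stable range $g\gg s$ (where the Pfaffian relations of the second fundamental theorem lie beyond degree $N(g)$), generated by the point classes $u_i$ and the symplectic contractions $a_{\{i,j\}}$ subject to exactly the two relations verified above; that is, it is $A^\bullet_S$ through degree $N(g)$. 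The nontrivial components contribute the twisted groups $H^p(\cM_g,\mathbb{V}_\lambda)$ with $\lambda\neq 0$.

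\textbf{Main obstacle.} The crux, and the only place the hypothesis $g\gg s$ (the Harer range $N(g)$) is genuinely used, is the vanishing
$$
H^p(\cM_g,\mathbb{V}_\lambda)=0\qquad(\lambda\neq 0)
$$
through total degree $N(g)$, for every nontrivial irreducible symplectic local system occurring in $H^\bullet(C^s)$. Granting it, the $E_2$-page reduces in the stable range to the Tate part $H^\bullet(\cM_g)\otimes A^\bullet_S$; the spectral sequence then degenerates because all surviving classes are even and algebraic, so the differentials vanish on weight grounds, and the abutment is precisely the image of $\phi^S_g$, which is therefore an isomorphism in degrees $\leq N(g)$. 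I expect this vanishing to be the main technical input: it follows from Harer stability together with the Madsen--Weiss computation of $H^\bullet(\cM_\infty)$ and its twisted refinement (Looijenga), and is consistent with the fact that the stable cohomology of the marked-point spaces $\cM^S_g$ is purely tautological, as in the preceding proposition. The remaining work---matching the Schur--Weyl constituents with monomials in the $u_i$ and $a_I$, and propagating the range $N(g)$ through the computation---is routine.
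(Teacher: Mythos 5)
The paper itself does not prove this proposition; it simply cites Looijenga \cite[Theorem 2.3]{Lo}, so your proposal is measured against that source. Your first half (well-definedness of $\phi^S_g$ via the identification of $\theta_i$ and $\theta_j$ along $\cC_g(P_I)$ and the excess-intersection formula $\cC_g(P_I)\cdot\cC_g(P_J)=u_i^{|I\cap J|-1}\,\cC_g(P_{I\cup J})$, plus formal equivariance and Hodge-type arguments) is sound in outline, and your version of the second relation is in fact the correct one (the paper's displayed relation omits the factor $a_{I\cup J}$). The overall strategy --- Leray spectral sequence for $\cC^S_g\to\cM_g$, degeneration at $E_2$ by Deligne, and decomposition of $H^\bullet(C^s)$ into $Sp_{2g}$-isotypic local systems $\mathbb{V}_\lambda$ --- is also the right one and is essentially Looijenga's.

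However, the step you yourself flag as the main technical input is false: it is \emph{not} true that $H^p(\cM_g,\mathbb{V}_\lambda)=0$ for $\lambda\neq 0$ in the stable range, and if it were true the proposition would fail. Test it on $s=1$: then $A^\bullet_S=\Q[u_1]$, so the proposition asserts that $1,u,u^2,\dots$ are independent over $H^\bullet(\cM_g)$ in $H^\bullet(\cC_g)$ through degree $N(g)$. The $E_2$-terms with trivial local system contribute only $H^\bullet(\cM_g)\oplus H^{\bullet-2}(\cM_g)\cdot u$ (the columns $q=0,2$); if all twisted groups vanished, $u^2$ would have to be an $H^\bullet(\cM_g)$-combination of $1$ and $u$, and pushing the resulting identities forward along $\pi$ (using $\pi_*u^{i+1}=\kappa_i$, $\pi_*u=2-2g$) forces a polynomial relation among $\kappa_1,\kappa_2,\kappa_3$, contradicting the stable algebraic independence of the $\kappa$-classes (Miller--Morita, Madsen--Weiss). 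So $H^3(\cM_g,\mathbb{V})\neq 0$ stably. The actual content of \cite{Lo} is the opposite of a vanishing statement: it is a \emph{computation} of the stable twisted cohomology $H^\bullet(\Gamma_\infty;\mathbb{V}_\lambda)$ as a nonzero free module over $H^\bullet(\Gamma_\infty;\Q)$ on explicit generators, and it is precisely these twisted classes that assemble, via the Leray filtration, into the higher monomials $u_i^k$ and the products of diagonal classes generating $A^\bullet_S$. Your proof cannot be repaired by a vanishing argument; it needs that computation (or an equivalent identification of the twisted contributions with tautological classes) as input.
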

\begin{proof}
See \cite[Theorem 2.3]{Lo}.
\end{proof}

Here $N(g_0)$ is the maximal integer such that $\phi^S_g$ induces isomorphisms
in degrees $\leq N(g_0)$ for all $g\geq g_0$ and $s\geq 0$. Some bounds on $N(g)$ are given in \cite{Ha}, \cite{ivanov}, \cite{ivanov2}.

The \textit{stable cohomology} of $\cM_g$ is the cohomology space for which the maps
$\psi^S_g,\phi^S_g$ are isomorphisms (alternately, it is the cohomology of the
limiting group $\Gamma_\infty$ of the various $\Gamma^s_g$, or the  rational cohomology of the stable moduli space, for example see
\cite{Ma-We}).

I. Madsen and M. Weiss \cite{Ma-We} have proved Mumford's conjecture on the structure of the stable cohomology space of $\cM_g$.

\begin{theorem}\cite{Ma-We}\label{Mumfordconjecture}
The stable cohomology of $\cM_g$ is generated by the classes $\kappa_i$. Here $\kappa_i$ are canonical algebraic classes defined by Mumford in \cite{Mumford}. The class $\kappa_i$ is the direct image of the $i+1$-st power of the first Chern class of the relative dualizing sheaf of $\cC_g\rar \cM_g$.
\end{theorem}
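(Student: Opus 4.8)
The plan is to deduce this statement from the stronger homotopy--theoretic assertion, the Madsen--Tillmann form of Mumford's conjecture, that the plus construction $\Z\times B\Gamma_\infty^+$ is weakly equivalent to the infinite loop space $\Omega^\infty\C\p^\infty_{-1}$ of the Thom spectrum of the virtual bundle $-L$ over $\C\p^\infty=BSO(2)$, where $L$ is the tautological complex line bundle. Granting this equivalence, the rational cohomology statement is formal: the rational cohomology of a base--point component of an infinite loop space $\Omega^\infty E$ is the free graded--commutative algebra on the reduced rational cohomology $\widetilde H^\bullet(E,\Q)$ of the spectrum. For $E=\C\p^\infty_{-1}$ the Thom isomorphism shifts $H^\bullet(\C\p^\infty,\Q)=\Q[c]$, $|c|=2$, down by the virtual dimension $-2$, leaving exactly one generator in each positive even degree $2i$. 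Since Harer's stability theorem identifies the homology of $B\Gamma_\infty^+$ with the stable cohomology of $\cM_g$, one obtains that the stable rational cohomology is the polynomial algebra on generators in degrees $2,4,6,\ldots$, which in particular shows it is generated by these classes.

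To establish the equivalence I would proceed as follows. First, Harer's homological stability identifies the homology of the stable mapping class group with that of the topological monoid of oriented surfaces, and the group completion theorem of Segal and McDuff converts the classifying space of this monoid, after the plus construction, into (a loop of) the classifying space of the two--dimensional cobordism category. Second, the parametrized Pontryagin--Thom construction produces a scanning map from this classifying space to $\Omega^\infty\C\p^\infty_{-1}$: a fibrewise family of surfaces embedded in $\R^\infty\times\R$ is collapsed onto its normal data, and because the fibres are oriented $2$--manifolds this data is classified by the Thom spectrum of $-L$ over $\C\p^\infty$. Third, and this is the crux, one must prove that the scanning map is a homotopy equivalence.

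The hard part will be exactly this last identification, which is the technical heart of Madsen--Weiss. The argument replaces the individual moduli spaces by the sheaf on smooth manifolds that assigns to $M$ the set of families of surfaces carried by submersions over $M$, and shows that the representing space of this sheaf agrees with $\Omega^\infty$ of the associated spectrum. The essential devices are the comparison of the representing space of such a concordance--invariant sheaf with an infinite loop space, together with a geometric surgery and jet--transversality argument proving that the forgetful maps between the relevant spaces of submersions are highly connected. Only once this equivalence is in hand does the problem reduce to the purely algebraic computation of $H^\bullet(\Omega^\infty\C\p^\infty_{-1},\Q)$ described above.

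Finally, I would match the polynomial generators with Mumford's classes. The universal surface bundle over $B\Gamma_\infty$ carries a relative dualizing line bundle, and the fibre integrals of the powers of its first Chern class are by definition the $\kappa_i\in H^{2i}$. Under the Pontryagin--Thom equivalence these fibrewise pushforwards correspond precisely to the canonical even--degree generators coming from the powers of $c$ on $\C\p^\infty$, so the $\kappa_i$ are identified with the free generators found in the spectrum computation. This confirms that the $\kappa_i$ generate the stable rational cohomology of $\cM_g$.
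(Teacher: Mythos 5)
The paper does not actually prove this statement: Theorem \ref{Mumfordconjecture} is quoted from Madsen--Weiss \cite{Ma-We} and used as a black box, so there is no internal argument to compare yours against. What you have written is a faithful outline of the actual proof in the literature, and as an outline it is sound: the reduction of the cohomological statement to the weak equivalence $\Z\times B\Gamma_\infty^+\simeq \Omega^\infty\C\p^\infty_{-1}$ is correct (rationally, the cohomology of a basepoint component of $\Omega^\infty E$ is the free graded--commutative algebra on $\widetilde H^{>0}(E,\Q)$, and the Thom isomorphism for $-L$ leaves exactly one generator in each degree $2i$, $i\geq 1$); Harer stability together with the fact that $\cM_g$ is a virtual classifying space for $\Gamma_g$ transports this to the stable cohomology of $\cM_g$; and the fibre--integration description of the $\kappa_i$ does match these generators under the Pontryagin--Thom collapse, so you even obtain the stronger conclusion that the stable rational cohomology is the free polynomial algebra $\Q[\kappa_1,\kappa_2,\ldots]$. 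The point to be honest about is that the step you correctly flag as the crux --- that the scanning map is a weak equivalence --- is essentially the entire content of the Madsen--Weiss paper: the sheaf--theoretic reformulation, the comparison of representing spaces with infinite loop spaces, and the surgery and jet--transversality arguments are each substantial, and naming them is not the same as proving them. So your proposal is a correct roadmap rather than a proof, and for the purposes of this paper the authors' choice to cite \cite{Ma-We} is the appropriate one; your sketch is useful as an explanation of where the theorem comes from and why the generators sit in degrees $2,4,6,\ldots$.
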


\begin{corollary}
The (virtual) stable moduli space $\cM_g$ has a Chow--K\"unneth decomposition in the sense of definition \ref{CK-def}.
\end{corollary}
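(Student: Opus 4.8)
The plan is to deduce the statement directly from the Madsen--Weiss theorem together with Lemma \ref{simpleprojectors}, and the first step is to record the structural input. By Theorem \ref{Mumfordconjecture} the stable rational cohomology of $\cM_g$ is generated as an algebra by the Mumford classes $\kappa_i$. Two features of this description are decisive. On one hand each generator $\kappa_i$ is \emph{algebraic}: it is the pushforward of the $(i+1)$-st power of $c_1(\omega)$, where $\omega$ is the relative dualizing sheaf of the universal curve $\cC_g\rar\cM_g$, hence lies in the image of the cycle class map. On the other hand $\kappa_i$ sits in the even degree $2i$. Since sums and products of algebraic even classes are again algebraic and even, it follows that \emph{every} class in the stable cohomology is algebraic and that the stable cohomology is concentrated in even degrees; in particular the stable odd cohomology vanishes.

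With this in hand the second step is purely formal. The algebraicity just established means that the subalgebra $H^*(\cA)$ of algebraic classes appearing in Lemma \ref{simpleprojectors} is the \emph{entire} stable cohomology algebra. The even K\"unneth components $\pi_{2p}^{hom}$ are then tensor products of algebraic classes, hence algebraic, while all odd components vanish, so the classes $\pi_i^{hom}$ are algebraic and the hypotheses of a K\"unneth decomposition are met. Feeding $H^*(\cA)$ into the lemma produces projectors $\pi_{2p}$ acting as $\delta_{i,p}$ on $H^{2p}(\cA)$ together with $\pi_{2p-1}=0$; the internal Gram--Schmidt argument then upgrades these to mutually orthogonal projectors, constructed via the non-degenerate intersection pairing as $\pi_{2p}=\sum_{k}\frac{1}{M_{kk}}D_k\times C_k$, which sum to the diagonal. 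Because no non-algebraic classes are left over, these exhaust the full decomposition, and this is precisely a Chow--K\"unneth decomposition in the sense of Definition \ref{CK-def}.

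The main obstacle is not the algebra but making the word \emph{stable} precise. The stable moduli space is a virtual (limiting) object, the classifying space of $\Gamma_\infty$ rather than a finite-dimensional smooth projective stack, so the Poincar\'e duality pairing and the literal diagonal class invoked by Lemma \ref{simpleprojectors} are not directly available. I would resolve this by fixing a genus $g$ large relative to the cohomological degrees in play and working on the smooth Deligne--Mumford compactification $\ov{\cM_g}$, whose coarse moduli space is projective of dimension $n=3g-3$. In the Harer stable range of degrees $\le N(g)$ the cohomology agrees with the stable cohomology and is generated by the $\kappa_i$, and on the honestly compact stack $\ov{\cM_g}$ the cup--product pairing is non-degenerate, which is exactly the input the lemma requires to invert the intersection matrix. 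The delicate point is then to check that the projectors produced for increasing $g$ are compatible with the stabilisation maps, so that they descend to a well-defined decomposition of the limiting object; I expect this compatibility, rather than the existence of the projectors for each fixed $g$, to be where the real work lies, and it should follow from the functoriality of the construction with respect to the pullback maps $\psi^S_g,\phi^S_g$ recorded above.
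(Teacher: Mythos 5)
Your proposal is correct and follows essentially the same route as the paper: invoke the Madsen--Weiss theorem to see that the stable cohomology is generated by the algebraic classes $\kappa_i$, apply Lemma \ref{simpleprojectors} to the subalgebra they generate, and pass to the closures of the $\kappa_i$ on the compactification $\ov{\cM_g}$ to extend the K\"unneth projectors to orthogonal projectors as Definition \ref{CK-def} requires. The stabilisation compatibility you flag as ``the real work'' is not needed: the paper reads the statement as an assertion about the stable subalgebra of $H^\bullet(\cM_g,\Q)$ for each fixed $g$, so the construction on a single $\ov{\cM_g}$ already suffices.
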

\begin{proof}
By Theorem \ref{Mumfordconjecture}, the stable cohomology is generated by algebraic classes. We can now apply Lemma \ref{simpleprojectors} to get algebraic K\"unneth projectors. These projectors can be extended to orthogonal projectors in the smooth compactification $\ov\cM_g$. Indeed, we can take the natural closure of the cycles $\kappa_i$ on the DM-stack $\ov\cM_g$ and take the $\Q$-subalgebra generated by these classes in $H^\bullet(\ov\cM_g,\Q)$. Then applying Lemma \ref{simpleprojectors} to this $\Q$-subalgebra, orthogonal projectors can be defined which restrict to the K\"unneth projectors on $\cM_g$.
\end{proof}

\subsection{Chow--K\"unneth decomposition for the moduli stack $\ov{\cM_{g,s}}$}

In this subsection, we will look at the DM--compactified moduli stacks $\ov{\cM_{g,s}}$ and show the existence of the Chow--K\"unneth decomposition when $g,s$ are small.

We recall the following results on $\ov{\cM_{g,s}}$.

\begin{theorem}\label{algcoh}
The rational cohomology of the moduli stack $\ov{\cM_{g,s}}$ has no odd cohomology and is generated by algebraic classes, if
$g=1, s\leq 3$ or $g=2, s\leq 2$ or  $g=3, s\leq 1$ or when $g=4, s=0$.
\end{theorem}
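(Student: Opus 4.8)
The plan is to reduce everything to two inputs: explicit base computations for the unmarked spaces $\ov{\cM_g}$ with $g\le 4$, and an induction on the number of marked points carried out along the universal curve. Throughout, I would package the two assertions to be proved -- vanishing of odd cohomology and algebraicity of the even part -- into the single statement that $H^\bullet(\ov{\cM_{g,s}},\Q)$ is concentrated in even degrees, of pure Hodge--Tate type, and spanned by tautological classes (products and pushforwards of $\psi$-, $\kappa$- and boundary classes), each of which is the class of an algebraic cycle. Once that single statement is known in the listed range, both conclusions of the theorem are immediate, and Lemma \ref{simpleprojectors} applies.

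For the base cases $s=0$ I would invoke the known descriptions of the rational cohomology: $\ov{\cM_2}$ is due to Mumford \cite{Mumford}, $\ov{\cM_3}$ to Looijenga, and $\ov{\cM_4}$ to Tommasi (building on work of Bergstr\"om), together with the elementary case of $\ov{\cM_{1,1}}$. In each of these the Poincar\'e polynomial has only even terms and every class is tautological, hence algebraic. Alternatively, each could be recovered intrinsically from the stratification of $\ov{\cM_g}$ by topological type: the normalisation of a closed stratum is a finite quotient of a product of smaller spaces $\ov{\cM_{g',n'}}$, so one runs the weight (Gysin) spectral sequence of the stratification and checks, by induction on dimension, that the output stays even and Tate.

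For the inductive step I would use the forgetful morphism $\pi:\ov{\cM_{g,s+1}}\rar\ov{\cM_{g,s}}$, which is the universal stable curve. Its Leray spectral sequence has $R^0\pi_*\Q=\Q$, $R^2\pi_*\Q=\Q(-1)$ and $R^1\pi_*\Q=\mathbb{V}$, the rank-$2g$ local system $H^1$ of the fibres; iterating the construction, the coefficient systems that appear are the irreducible summands of $\mathbb{V}^{\otimes k}$, i.e. the $\mathrm{Sym}$-type local systems indexed by dominant weights of the relevant symplectic group. The even pieces $R^0$ and $R^2$ contribute only tautological classes, so the entire question funnels into the cohomology of $\ov{\cM_{g,s}}$ with coefficients in these nontrivial local systems.

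The hard part is exactly this local-system cohomology, since it is governed by modular forms and is the only place where odd or non-algebraic classes can enter. For $g=1$ the Eichler--Shimura isomorphism identifies the interesting part of $H^\bullet(\ov{\cM_{1,1}},\mathrm{Sym}^k\mathbb{V})$ with spaces of elliptic cusp forms of weight $k+2$; the first such form occurs at weight $12$, i.e. $k=10$, reached only once $s$ is large (it is responsible for the famous class in $H^{11}(\ov{\cM_{1,11}})$). Since we restrict to $s\le 3$ only $k\le 2$ occurs, there are no cusp forms, and all local-system cohomology is Eisenstein, even and algebraic. For $g=2,3$ the same role is played by vector-valued Siegel (resp. Teichm\"uller) modular forms, whose lowest nonzero weights again lie strictly above the range forced by $s\le 2$ (resp. $s\le 1$); here I would cite the relevant low-weight vanishing (Bergstr\"om, Bergstr\"om--Faber--van der Geer) to exclude any cusp contribution. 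Pinning down these vanishings in the precise numerical range, and verifying that the Leray differentials do not destroy tautological generation, is where the real work sits; once that is in place the spectral sequence degenerates and yields a purely even, algebraically generated cohomology, which is the assertion.
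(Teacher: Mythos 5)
The paper's own ``proof'' of Theorem \ref{algcoh} is a pure citation: Getzler for $g\le 2$, Looijenga and Getzler--Looijenga for $g=3$, Bergstr\"om--Tommasi for $g=4$. What you propose is a from-scratch reconstruction of how those results are actually established (stratification by topological type, the forgetful map, local systems, and low-weight vanishing of cusp forms), so you are taking a genuinely different and far more ambitious route than the paper. The strategy is the right one in spirit, but as written it has concrete gaps beyond the one you flag yourself.

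First, the Leray spectral sequence of $\pi:\ov{\cM_{g,s+1}}\rar\ov{\cM_{g,s}}$ does not have the shape you describe: over the boundary the fibres are nodal, so $R^1\pi_*\Q$ is only a constructible sheaf, not the rank-$2g$ local system $\mathbb{V}$, and $R^2\pi_*\Q$ acquires extra summands supported on the boundary (one per extra irreducible component of the fibre). The local-system picture, Eichler--Shimura, and the symplectic plethysm you invoke all live on the \emph{open} strata; to run your induction on the compactified stacks you must either pass to the stratification and control weight spectral sequences on each stratum, or invoke the decomposition theorem. Second, and more seriously, the induction does not close within the range of the theorem: the boundary strata of, say, $\ov{\cM_4}$ are finite quotients of products involving $\ov{\cM_{3,2}}$, $\ov{\cM_{2,3}}$, $\ov{\cM_{1,5}}$, etc., none of which appear in the statement, so ``induction on dimension'' forces you to compute local-system cohomology of $\cM_{g',n'}$ for pairs well outside your list, with the cusp-form vanishing needed in a correspondingly wider numerical range (this is exactly the content of the Getzler and Bergstr\"om--Tommasi computations). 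Finally, degeneration of the relevant spectral sequences and tautological generation of the even part are asserted rather than proved. In short: your outline correctly identifies where the difficulty lies, but the passage from the outline to a proof is precisely the content of the papers that Theorem \ref{algcoh} cites, and it is not supplied here.
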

\begin{proof}
When $g\leq 2$, see \cite{getzler}.

When $g=3$, see \cite{looijenga2}, \cite{getzler-looijenga}.

When $g=4$, see \cite{tommasi}.
\end{proof}

\begin{corollary}
The moduli stacks $\ov{\cM_{g,s}}$ have an explicit Chow--K\"unneth decomposition, if
$g=1, s\leq 3$ or $g=2, s\leq 2$ or  $g=3, s\leq 1$ or when $g=4, s=0$.
For any $g$ and $s$, one can always construct canonical orthogonal projectors 
$$
\pi_0,\pi_1,\pi_2,\pi_3,\pi_5, \pi_{2n-5},\pi_{2n-3},\pi_{2n-2},\pi_{2n-1}, \pi_{2n}
$$
 where $n:=\m{dim}\ov{\cM_{g,s}}$.
\end{corollary}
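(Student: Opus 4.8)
The plan is to split the statement into its two assertions and handle each by invoking the machinery already assembled. For the first assertion — the existence of a full Chow--K\"unneth decomposition in the four listed ranges $(g,s)$ — I would argue as follows. By Theorem \ref{algcoh}, in each of these cases the rational cohomology of $\ov{\cM_{g,s}}$ has no odd-degree part and is generated entirely by algebraic classes. Thus the subalgebra $H^*(\cA)$ of algebraic classes appearing in Lemma \ref{simpleprojectors} equals the full cohomology algebra $H^*(\ov{\cM_{g,s}},\Q)$, and moreover the bound $m<n$ in that lemma can be taken to be $m=2n$ since every graded piece is algebraic. Applying Lemma \ref{simpleprojectors} then produces orthogonal projectors $\pi_{2i}$ for all $i$, with $\pi_{2i-1}=0$, and these sum to the diagonal class in $CH^n(\ov{\cM_{g,s}}\times\ov{\cM_{g,s}})_\Q$. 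Since $\ov{\cM_{g,s}}$ is already a smooth DM-stack with projective coarse moduli space, no further compactification is needed, and the decomposition is ``explicit'' in the sense that each projector is written down directly from a basis of algebraic cycles via the diagonalized intersection matrix.

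For the second assertion — the unconditional construction of the ten named projectors for arbitrary $(g,s)$ — the key input is that the low-degree cohomology of $\ov{\cM_{g,s}}$ is always algebraic, independently of any vanishing of odd cohomology in the middle range. Concretely, I would note that in degrees $0,2,4$ the cohomology is tautological and hence algebraic (generated by the classes $\kappa_i$, the $\psi$-classes, and boundary divisor classes), that the degree-$1$ and degree-$3$ cohomology vanish or are algebraic, and that degree $5$ carries no transcendental part in this range. This gives an algebraic subalgebra $H^*(\cA)$ with graded pieces controlled up to degree $m=5$. Feeding precisely this $H^*(\cA)$ into Lemma \ref{simpleprojectors} with $m=5$ yields the orthogonal projectors $\pi_0,\pi_1,\pi_2,\pi_3,\pi_5$ together with their adjoints $\pi_{2n-5},\pi_{2n-3},\pi_{2n-2},\pi_{2n-1},\pi_{2n}$, exactly as listed. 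The projectors $\pi_4$ and $\pi_{2n-4}$ are omitted because Lemma \ref{simpleprojectors} only supplies projectors up to the cut-off degree $m$, and here $m=5<6$.

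The step I expect to be the main obstacle is justifying that the relevant low-degree cohomology groups are genuinely algebraic for \emph{all} $(g,s)$, rather than only in the four distinguished cases. For the very bottom degrees this follows from the tautological nature of $H^0,H^2,H^4$ and the vanishing of $H^1,H^3$, but establishing algebraicity (or vanishing of the transcendental part) of $H^5$ in complete generality requires appealing to the known structure of the low-degree cohomology of $\ov{\cM_{g,s}}$ and its tautological generation in small degrees; I would cite the relevant results rather than reprove them. Once this algebraicity is in hand, everything else is a mechanical application of Lemma \ref{simpleprojectors}: the Gram--Schmidt orthogonalization and the passage to the diagonal intersection matrix are routine, and the adjoint (transpose) construction automatically produces the top-degree projectors from the bottom-degree ones.
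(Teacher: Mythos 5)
Your treatment of the first assertion is essentially the paper's: Theorem \ref{algcoh} makes the whole of $H^*(\ov{\cM_{g,s}},\Q)$ algebraic and concentrated in even degrees, and Lemma \ref{simpleprojectors} then manufactures the full set of orthogonal projectors. That part is fine.

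For the second assertion there is a genuine problem. The paper's argument is short and rests on two facts valid for \emph{all} $g,s$: $H^2(\ov{\cM_{g,s}},\Q)$ is always algebraic, and $H^i(\ov{\cM_{g,s}},\Q)=0$ for $i=1,3,5$ (citing \cite{cornalba}). From this, Lemma \ref{simpleprojectors} applied to the subalgebra generated by $H^0$ and $H^2$ yields $\pi_0,\pi_2$ and their adjoints $\pi_{2n-2},\pi_{2n}$, while $\pi_1,\pi_3,\pi_5$ (and their adjoints) are the zero projectors, which trivially lift the vanishing K\"unneth components. You instead assert that $H^4$ is tautological, hence algebraic, for arbitrary $(g,s)$. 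That claim is not established in the generality you need, is not cited, and is not what the paper relies on; it is exactly the point where your argument would have to do real work. Worse, your own explanation of why $\pi_4$ and $\pi_{2n-4}$ are missing from the list is inconsistent with that claim: if $H^4$ were algebraic and you run Lemma \ref{simpleprojectors} with $m=5$, the lemma \emph{would} produce $\pi_4$ (it supplies $\pi_0,\dots,\pi_m$), so the cut-off $m=5<6$ cannot be the reason for its absence. The actual reason $\pi_4$ is omitted is that $H^4(\ov{\cM_{g,s}},\Q)$ is not known to be algebraic in general, whereas $\pi_5$ can be listed only because $H^5$ vanishes. Replacing your $H^4$ claim by the vanishing statement for $H^1,H^3,H^5$ and the algebraicity of $H^2$ repairs the argument and recovers exactly the ten projectors in the statement.
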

\begin{proof}
The first assertion follows from Lemma \ref{simpleprojectors} and Theorem \ref{algcoh}. The second assertion follows from the fact that $H^2(\ov{\cM_{g,s}},\Q)$
is always algebraic and $H^i(\ov{\cM_{g,s}},\Q)=0$ if $i=1,3,5$ (see \cite{cornalba}).
\end{proof}

%%%%%%%%%%%%%%%%%%%%%%%%%%%%%%%%%%%%%%%%%%%%%%%%%%%%%%%%%%%%%%%%%%%%%%%%%

\section{A birational model of the moduli space $\cR_{3,2}$ of Bardelli-Ciliberto-Verra}\label{birationalmodel}

%%%%%%%%%%%%%%%%%%%%%%%%%%%%%%%%%%%%%%%%%%%%%%%%%%%%%%%%%%%%%%%%%%%%%%%

In this section, we will look into the question of describing the moduli space
$\cR_{3,2}$ studied by Bardelli-Ciliberto-Verra \cite{BCV}. The description is similar to the description of the moduli space $\cM_g$, for small $g$, studied by several authors (for example see \cite{Mukai3}, \cite{Mukai6}, \cite{Dolgachev}). The birational model is usually a group quotient of a homogeneous space. Such a description is useful in exhibiting a Chow--K\"unneth decomposition at least
over an open subset of the moduli space and also for addressing the question of rationality/unirationality. This was used in \cite{Iy-Mu} for obtaining a Chow--K\"unneth decomposition
of open subsets of $\cM_g$, for $g\leq 9$. We would like to extend similar conclusions to open subsets of the moduli space of double covers. This will be carried out in the next section.

More precisely, let $\cR_{3,2}$ be the moduli space of all isomorphism classes of double coverings $f:C'\rar C$ with $C$ a smooth curve of genus $3$, $C'$ irreducible and $f$ is branched at $4$ distinct points of $C$. Alternately, $\cR_{3,2}$ is the moduli space of isomorphism classes of triples $(C,B,\cL)$, where $C$ is a smooth curve of genus $3$, $B$ is an effective divisor on $C$ formed by $4$ distinct points, $\cL$ is a line bundle on $C$ such that $\cL^{\otimes 2}\simeq \cO(B)$.

Note that the genus of the curve $C'$ is $g'=7$ and $\cR_{3,2}\subset \cM_{7}$.
Then we have
$$
\mbox{dim}\cR_{3,2}= 10.
$$
Our main theorem in this section is the following:

\begin{theorem}\label{birational}

The moduli space $\cR_{3,2}$ is birational to the group quotient of a product of Grassmannians $G(3,U^+)\times G(4,U^-)$, by an algebraic subgroup 
$H\subset SO(10)$.  Here $H$ is contained in the centraliser of the action of an involution $i$ on $SO(10)$. Moreover, there is an irreducible $16$-dimensional projective representation $U$ of $SO(10)$ and $U=U^+\oplus U^-$ is a splitting as $\pm$-eigenspaces for the involution $i$ acting on $U$.
We can write the birational equivalence as
$$
\cR_{3,2}\sim (G(3,U^+)\times G(4,U^-))/H.
$$
\end{theorem}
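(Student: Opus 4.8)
The plan is to embed $\cR_{3,2}$ into Mukai's linear-section model of $\cM_7$ and then cut everything down by the covering involution. First I would recall Mukai's theorem (see \cite{Mukai3}, \cite{Mukai6}): a general canonical curve of genus $7$ is a transverse linear section $C'=S\cap \p^6$ of the $10$-dimensional spinor variety $S=OG(5,10)\subset \p^{15}=\p(U)$, where $U$ is the $16$-dimensional half-spin (projective) representation of $SO(10)$. Since the connected automorphism group of $(S,\p^{15})$ is $SO(10)$ acting on $U$ through this representation, and since $\dim G(7,U)-\dim SO(10)=63-45=18=\dim \cM_7$, one gets the birational model $\cM_7\sim G(7,U)/SO(10)$. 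The task is then to restrict this model to the sublocus $\cR_{3,2}\subset \cM_7$.

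Next I would bring in the covering involution. A point of $\cR_{3,2}$ is a double cover $f:C'\rar C$ of a genus-$3$ curve branched along the $4$ points of $B$, with $\cL^{\otimes 2}\simeq \cO(B)$, and $C'$ carries the Galois involution $\sigma$. Riemann--Hurwitz gives $2g'-2=2(2\cdot 3-2)+4=12$, so $g'=7$ and $C'\subset \cM_7$. The involution $\sigma$ acts on $H^0(C',K_{C'})$ with the standard double-cover eigenspace decomposition
$$
H^0(C',K_{C'})^{+}\cong H^0(C,K_C),\qquad H^0(C',K_{C'})^{-}\cong H^0(C,K_C\otimes \cL).
$$
A Riemann--Roch computation on $C$ gives $\dim H^0(C,K_C)=3$ and $\dim H^0(C,K_C\otimes \cL)=\deg(K_C\otimes \cL)+1-3=6+1-3=4$, consistent with $3+4=7$. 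Dually, the canonical span $\p^6=\p(W)$, with $W\subset U$ the $7$-dimensional section, splits as $W=W^{+}\oplus W^{-}$ with $\dim W^{+}=3$ and $\dim W^{-}=4$.

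The key step is to propagate $\sigma$ to the ambient spinor geometry. Since the canonical curve $C'$ determines its containing spinor variety $S$ essentially uniquely, the involution $\sigma$ of $(C',\p^6)$ should extend to an involution of $(S,\p^{15})$, hence to an involution $i$ of $SO(10)$; as $\sigma^2=1$ this forces a splitting $U=U^{+}\oplus U^{-}$ into $\pm$-eigenspaces of $i$ (the branching of the half-spin representation giving $\dim U^{+}=\dim U^{-}=8$). The $7$-dimensional sections producing members of $\cR_{3,2}$ are then exactly the $i$-invariant ones $W=W^{+}\oplus W^{-}$ with $W^{+}\subset U^{+}$ of dimension $3$ and $W^{-}\subset U^{-}$ of dimension $4$, and these are parametrized by $G(3,U^{+})\times G(4,U^{-})$. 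Two such sections yield isomorphic double covers precisely when they differ by an automorphism of $(S,\p^{15})$ commuting with $i$, that is, by an element of the centraliser of $i$ in $SO(10)$; letting $H$ denote the subgroup of this centraliser whose generic orbits realise the moduli identification, I obtain $\cR_{3,2}\sim (G(3,U^{+})\times G(4,U^{-}))/H$. The dimension bookkeeping is consistent: $\dim(G(3,U^{+})\times G(4,U^{-}))=3\cdot 5+4\cdot 4=31$, so $\dim H=31-10=21$, matching the candidate centraliser $S(O(6)\times O(4))$.

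The hard part will be the rigidity/extension step: showing rigorously that $\sigma$ is induced by a genuine involution $i$ of $SO(10)$ preserving $S$, and that the $\pm$-eigenspace splitting of the $16$-dimensional half-spin representation restricts compatibly to the $3+4$ splitting of the canonical space. This requires tracking $\sigma$ through Mukai's identification of $S$ with the relevant Brill--Noether locus of bundles on $C'$ and computing the branching of the half-spin representation under $i$. Identifying $H$ exactly inside the centraliser is a further difficulty, which I would expect to leave open (as the paper does, posing it as a question).
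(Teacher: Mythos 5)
Your overall route is the same as the paper's: embed $\cR_{3,2}$ into Mukai's model $\cM_7\sim G(7,U_{16})/SO(10)$, use the $3+4$ eigenspace decomposition of $H^0(C',\omega_{C'})$ under the covering involution, extend the involution to $(X_{10},\p^{15})$ to split $U=U^+\oplus U^-$, and quotient $G(3,U^+)\times G(4,U^-)$ by a subgroup $H$ of the centraliser. Two remarks. First, the step you flag as the ``hard part'' (extending $\sigma$ to the ambient spinor geometry) is in fact the easy part: Mukai's extension theorem (Theorem \ref{automorphism}, applied with $C_1=C_2=C'$ and the isomorphism equal to $\sigma$) gives the automorphism of $(X_{10},\p^{15})$ immediately, and hence the action on $U_{16}=H^0(X_{10},\cO(1))$.

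The genuine gap is elsewhere: you never verify that the generic member of $\cR_{3,2}$ is covered by Mukai's model at all. Mukai's linear-section description applies only to \emph{non-tetragonal} curves of genus $7$, and the statement that the generic curve in $\cM_7$ is non-tetragonal says nothing about the $10$-dimensional sublocus $\cR_{3,2}$, which sits inside the singular locus of the $18$-dimensional $\cM_7$ and could a priori consist entirely of tetragonal curves. The paper closes this with Lemma \ref{nontetragonal} (on a non-hyperelliptic $C'$ with this involution, any $g^1_4$ must be the pullback of a degree-$2$ line bundle on $C$, proved by a Clifford/Riemann--Roch eigenspace argument) and Corollary \ref{generic} (the generic degree-$2$ line bundle on a genus-$3$ curve has no sections, so its pullback is not a $g^1_4$). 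A second, smaller omission is the converse direction of your ``exactly'': you need to argue that the generic $i$-invariant section $\p(W^+\oplus W^-)\cap X_{10}$ actually lies in $\cR_{3,2}$, rather than in some other family of genus-$7$ curves with involution (different numbers of fixed points give different quotient genera). The paper deduces this from Cornalba's classification of the singular locus of $\cM_7$ (Proposition \ref{cornalba2}): $\cR_{3,2}$ is a \emph{maximal} component of that locus, so the image of $G(3,U^+)\times G(4,U^-)$, which lands in the singular locus and contains a dense subset of $\cR_{3,2}$, must coincide with $\cR_{3,2}$ birationally. Finally, your assertion that $\dim U^+=\dim U^-=8$ is plausible but unproved; the paper deliberately leaves the dimensions (and the group $H$) undetermined, recording only the a priori possibilities $(r,16-r)$.
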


Our proof follows by analysing  Mukai's classification \cite{Mukai3}, \cite{Mukai6} of the generic genus $7$ canonical curve, taking into account the action of the involution. Whenever a genus $7$ smooth curve is not tetragonal, then it is a linear section of an orthogonal Grassmannian $X_{10}\subset \p^{15}$, given by the spinor embedding (see \cite[p.1632]{Mukai3}). Here $\p^{15}=\p(U_{16})$ where $U_{16}$ is the irreducible spinor representation of the spin group $Spin(10)$. Hence the space $U_{16}$ is a projective representation of the special orthogonal group $SO(10)$. Projectively, this can be translated to say that the group $SO(10)$ acts on $\p^{15}$ and leaves the orthogonal Grassmannian $X_{10}$ invariant. In particular $SO(10)$ also acts on the linear subspaces of $\p^{15}$ and we will require its action on the Grassmannian $G(7,U_{16})$.
This is because a general linear subspace $\p^6\subset \p^{15}$ restricted to $X_{10}$
gives a canonical curve $C$ of genus $7$. In other words, $\p^6$ is the complete linear system given by the canonical bundle on $C=\p^6\cap X_{10}$.

Furthermore, we have the following result on the embedding into the homogeneous space.

\begin{theorem}\label{automorphism}
Assume that two linear spaces $P_1,P_2$ cut out smooth curves $C_1, C_2$ from the symmetric space $X_{10}\subset \p^{15}$ respectively. Then any isomorphism
from $C_1$ onto $C_2$ extends to an automorphism $\phi$ of $X_{10}\subset \p^{15}$ with $\phi(P_1)=P_2$.
\end{theorem}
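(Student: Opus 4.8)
The plan is to reconstruct the ambient spinor tenfold together with its spinor embedding intrinsically from the abstract curve, so that an abstract isomorphism of curves is automatically promoted to a projective automorphism of $X_{10}\subset\p^{15}$. This is the strategy of Mukai \cite{Mukai3}, \cite{Mukai6}, and I would follow it.

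First I would identify the two ambient copies of $\p^6$ with the canonical spaces of the curves. The spinor tenfold is a Fano variety of dimension $10$ and index $8$, so $-K_{X_{10}}=\cO_{X_{10}}(8)$ for the spinor (ample) generator $H=\cO_{X_{10}}(1)$. Cutting by the codimension-$9$ linear space $P_i=\p^6$ and applying adjunction gives $K_{C_i}=(K_{X_{10}}+9H)|_{C_i}=\cO(1)|_{C_i}$, of degree $2g-2=12$. Since $X_{10}$ is projectively normal (arithmetically Gorenstein) and the section is transverse by hypothesis, $C_i=P_i\cap X_{10}$ is projectively normal with complete hyperplane series; hence $C_i\subset P_i$ is the full canonical embedding and $P_i=\p(H^0(C_i,K_{C_i})^{\vee})$ canonically. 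Consequently an isomorphism $\sigma\colon C_1\rar C_2$ acts on the canonical series and induces a unique projective-linear isomorphism $\ov\sigma\colon P_1\rar P_2$ with $\ov\sigma|_{C_1}=\sigma$. The theorem is thereby reduced to extending $\ov\sigma$ to some $\phi\in\mathrm{Aut}(X_{10}\subset\p^{15})$ with $\phi(P_1)=P_2$.

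The core step is Mukai's reconstruction of the quadratic space from the curve. Writing $X_{10}=OG_+(5,V_{10})$ for the orthogonal Grassmannian of maximal isotropic subspaces, with tautological rank-$5$ isotropic subbundle $U$, I would restrict $E_i:=U^{\vee}|_{C_i}$ to $C_i$. One has $H^0(X_{10},U^{\vee})=V_{10}$, and by projective normality this persists on the linear section, so $H^0(C_i,E_i)$ recovers the $10$-dimensional space $V_{10}$, equipped with its nondegenerate quadratic form $q$ reconstructed from the natural pairings of sections (the self-duality $V_{10}\cong V_{10}^{\vee}$ coming from $U=U^{\perp}$). For a general (Brill--Noether general, non-tetragonal) genus $7$ curve, $E_i$ is stable, and the decisive input is its \emph{uniqueness}: for such $C$ it is the only stable bundle with these invariants and expected number of sections, so the assignment $C\mapsto(V_{10}(C),q_C)$ is canonical and depends only on the isomorphism class of $C$. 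Then $X_{10}$ reappears as the spinor tenfold of $(V_{10}(C),q_C)$ inside $\p^{15}=\p(U_{16})$, with $U_{16}$ the half-spinor representation, and $C$ as the linear section $P\cap X_{10}$.

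Finally I would assemble the pieces. By uniqueness, $\sigma\colon C_1\rar C_2$ carries $E_1$ to $\sigma^{*}E_2$ and hence induces an isometry up to scalar $\psi\colon(V_{10}(C_1),q_{C_1})\rar(V_{10}(C_2),q_{C_2})$. Identifying both reconstructed quadratic spaces with the fixed $V_{10}$ of the given $X_{10}$, the element $\psi$ lies in $\mathrm{GO}(10)$ and its class in $\mathrm{PSO}(10)=\mathrm{Aut}(X_{10}\subset\p^{15})$ defines $\phi$; functoriality of the spinor embedding forces $\phi$ to restrict to $\ov\sigma$ on $P_1$, so $\phi(P_1)=P_2$ and $\phi|_{C_1}=\sigma$. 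The main obstacle is precisely the reconstruction step: establishing existence and, above all, uniqueness of the Mukai bundle (a rigidity computation valid only for $C$ general) and verifying that every construction in the chain $C\mapsto E\mapsto(V_{10},q)\mapsto X_{10}$ is natural for abstract isomorphisms of curves; one must also control the scalar ambiguity in $q$ and the choice of half-spinor component so as to land in the identity component $\mathrm{PSO}(10)$ rather than merely $\mathrm{PGO}(10)$.
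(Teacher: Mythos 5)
The paper does not actually prove this statement: its ``proof'' is the single line ``See \cite[Theorem 3]{Mukai6}'', so the only meaningful comparison is between your sketch and Mukai's own argument. Your outline is exactly the strategy of the cited proof (canonical-space identification via adjunction and projective normality, reconstruction of the ten-dimensional quadratic space from the restricted tautological bundle, uniqueness of that bundle, and descent of the resulting isometry to an automorphism of the spinor tenfold), so in that sense you have taken the same route the paper relies on.

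There is, however, one genuine gap between what you assert and what the theorem claims. The statement is for \emph{any} smooth curves cut out by $P_1,P_2$, whereas your decisive input --- existence and uniqueness of the stable rank-$5$ Mukai bundle --- is invoked only ``for a general (Brill--Noether general, non-tetragonal) genus $7$ curve.'' As written, your argument proves the extension property only for generic linear sections, which is strictly weaker. To close this you need two facts from \cite{Mukai3}, \cite{Mukai6}: first, that \emph{every} smooth one-dimensional transverse linear section of $X_{10}$ is a non-tetragonal genus $7$ curve (so the hypothesis of the uniqueness theorem is automatic, not generic); second, that the uniqueness of the rank-$5$ bundle with canonical determinant and $h^0=10$ holds for \emph{all} non-tetragonal curves of genus $7$, not merely for a dense open set in moduli --- this is the content of Mukai's rigidity argument and cannot be replaced by a generic-curve statement, since the theorem is applied in the paper precisely along the special locus $\cR_{3,2}$ inside the singular locus of $\cM_7$. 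The remaining caveats you raise (scalar ambiguity in the quadratic form, the choice of half-spinor component, landing in the correct component of the orthogonal group) are real but routine; the general/arbitrary mismatch is the one point that would make the proof fail for the curves the paper actually needs.
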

\begin{proof}
See \cite[Theorem 3]{Mukai6}.
\end{proof}
This theorem characterises the non-tetragonal curves of genus $7$.
Explicitly, the moduli space has the following birational model \cite[\S 5, p.1639]{Mukai3}:
\begin{eqnarray*}
\cM_7 & \sim & G(7,U^{16})/SO(10).\\
\end{eqnarray*}

To obtain a birational model of $\cR_{3,2}$, we will utilise the above birational model of $\cM_7$ and analyse the birational equivalence restricted
to the sublocus $\cR_{3,2}$.

We will need the following lemma in our proof of Theorem \ref{birational}.
We say that a curve $C'$ is tetragonal if and only if there is a line bundle $L\in g^1_4(C')$.

\begin{lemma}\label{nontetragonal}
Consider a double cover $f:C'\rar C$, defined by a line bundle $\cL$ branched along the set $B$ of $4$ distinct points, and such that $\cL^2=\cO(B)$. Assume that $C, C'$ are not hyperelliptic.
The curve $C'$ has a $L\in g^1_4$ only if $L$ is the pullback of a line bundle of degree $2$ on $C$.
\end{lemma}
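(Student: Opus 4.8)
The plan is to use the deck involution $\sigma$ of the double cover $f\colon C'\rar C$, so that $C=C'/\sigma$ and the fixed locus of $\sigma$ is the ramification divisor $R$, consisting of the four points lying over $B$; here $\cO_{C'}(R)=f^*\cL$ and $K_{C'}=f^*(K_C\otimes \cL)$. Given a pencil $L\in g^1_4$ on $C'$, I would reduce the lemma to two statements: (i) $L$ is $\sigma$--invariant, i.e. $\sigma^*L\cong L$; and (ii) an invariant $g^1_4$ lies in the image of $f^*$. Statement (ii) is the natural endpoint because any line bundle of the form $f^*M$ is automatically $\sigma$--invariant, so only invariant pencils can possibly descend; and once $L=f^*M$ a degree count gives $\deg M=2$, which is the assertion.

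For (i) I would argue with the norm map $N:=\m{Nm}(L)=f_*L\in\m{Pic}^4(C)$ and the standard identity $f^*N\cong L\otimes\sigma^*L$. Suppose $\sigma^*L\not\cong L$. Then $\sigma^*L\otimes L^{-1}$ is a non-trivial degree $0$ bundle, so $h^0(\sigma^*L\otimes L^{-1})=0$, and the base-point-free pencil trick shows the multiplication map $H^0(L)\otimes H^0(\sigma^*L)\rar H^0(L\otimes\sigma^*L)$ is injective; hence $h^0(f^*N)\ge 4$. Using $f_*\cO_{C'}=\cO_C\oplus\cL^{-1}$ this reads $h^0(N)+h^0(N\otimes\cL^{-1})\ge 4$. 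Since $C$ is non-hyperelliptic of genus $3$, Clifford's theorem and Riemann--Roch give $h^0(N)\le 3$, with equality iff $N\cong K_C$, and $h^0(N\otimes\cL^{-1})\le 1$ (degree $2$). Equality therefore forces $N\cong K_C$ together with $K_C\otimes\cL^{-1}$ effective, and a short Riemann--Roch computation with $K_{C'}=f^*K_C+R$ then shows $h^0(\cO_{C'}(R))=2$, i.e. $\cL$ is effective. Thus whenever $\cL$ is non-effective---the generic situation---the inequality is impossible and $\sigma^*L\cong L$ is forced.

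For (ii) I would examine the induced action of $\sigma$ on the pencil $H^0(C',L)\cong\comx^2$, an involution with eigenvalues $\pm1$. If it is scalar, every divisor in $|L|$ is $\sigma$--stable, hence equal to $f^*(\text{a divisor on }C)$, and $L=f^*M$ as required. If instead $\sigma$ acts with two distinct eigenvalues, then $\phi_L$ descends to a degree $4$ map $\bar\phi\colon C\rar\p^1/\mu\cong\p^1$, exhibiting $C'$ as (the normalisation of) the fibre product $C\times_{\p^1/\mu}\p^1$ of $\bar\phi$ with the double cover $\p^1\rar\p^1/\mu$; in particular $L^{\otimes 2}\cong f^*\bar\phi^*\cO(1)$ lies in $\m{im}(f^*)$ while $L$ itself need not.

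Ruling out this last, ``twisted'' possibility is where I expect the real difficulty to lie, and it is exactly the point at which the reducedness of $B$ and the non-hyperellipticity of $C'$ must be used. The plan is to compare branch data: by Riemann--Hurwitz the double cover $C'\rar C$ is branched along $4$ points, while the fibre-product description branches it along the odd-ramification part of $\bar\phi^{-1}$ of the two branch points of $\p^1\rar\p^1/\mu$. Matching these against the hypothesis that $B$ consists of four \emph{distinct} points with $\cL^{\otimes 2}\cong\cO(B)$, I would show that the configuration either destroys the reducedness of $B$ or forces $C'$ (equivalently $C$) to be hyperelliptic, contrary to assumption. The same circle of ideas---tracking when $\cL$ becomes effective and analysing the resulting invariant $2$--torsion that obstructs descent---handles the residual exceptional cases of step (i), and together these would complete the proof that every $g^1_4$ on $C'$ is the pullback of a degree $2$ line bundle on $C$.
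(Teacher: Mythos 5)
Your step (i) is, at its core, the same argument the paper gives: assuming $\sigma^*L\not\cong L$, the multiplication map $H^0(L)\otimes H^0(\sigma^*L)\to H^0(L\otimes\sigma^*L)$ is injective, Clifford's theorem on the non-hyperelliptic genus $7$ curve $C'$ bounds $h^0(L\otimes\sigma^*L)$ by $4$, and the decomposition $f_*(L\otimes\sigma^*L)=N\oplus(N\otimes\cL^{-1})$ with $N=\m{Nm}(L)$ turns this into $h^0(N)+h^0(N\otimes\cL^{-1})\ge 4$ on the genus $3$ curve $C$; the paper phrases the last step as the $\pm$-eigenspace decomposition of $H^0(L\otimes i^*L)$ and gets its contradiction from the claim \eqref{clifford} that $h^0(N\otimes\cL^{-1})=0$. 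Where you diverge is in observing, correctly, that this vanishing is not automatic: a degree $2$ line bundle on a non-hyperelliptic genus $3$ curve can perfectly well have $h^0=1$, and the contradiction then only materialises when $\cL$ is non-effective. On this point your version is more careful than the published one. The trouble is that you then leave both of your acknowledged gaps open, and they are genuine gaps rather than routine clean-up.

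First, the case $h^0(\cL)=1$ is not excluded by the hypotheses of the lemma (only $C$ and $C'$ are assumed non-hyperelliptic, and effective $\cL$ with $\cL^2=\cO(B)$, $B$ reduced, certainly occurs; note that $f^*\cL$ is then itself a $g^1_4$ on $C'$, albeit a pulled-back one). Asserting that ``the same circle of ideas handles the residual exceptional cases'' is a promise, not a proof. Second, your step (ii) puts its finger on a real subtlety that the paper passes over in one sentence --- on a ramified double cover an invariant line bundle need not descend, since $\cO_{C'}(p)$ for a ramification point $p$ is $\sigma$-invariant but has odd degree and so cannot be a pullback --- but your proposed branch-locus comparison for the ``twisted'' eigenvalue case is only a plan, and even in the scalar case the conclusion that every member of $|L|$ is of the form $f^*(\m{divisor})$ needs base-point-freeness to pick a member disjoint from the ramification divisor. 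As it stands, therefore, you have established the lemma only for non-effective $\cL$ and modulo the descent step; to be complete you must either close these two cases or weaken the statement to generic $(C,\cL,B)$, which is in fact all that Corollary \ref{generic} and the rest of the paper use.
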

\begin{proof}
The arguments are similar to \cite[Proposition 2.5, p.234]{Ramanan}, and we explain them below.
Let $L\in g^1_4(C')$, i.e., $L$ is a line bundle of degre $4$ on $C'$ and $h^0(L)= 2$. If $L\simeq i^*L$ then $L$ descends down to the quotient curve $C$ as a line bundle of degree $2$. Suppose $L$ is not isomorphic to $i^*L$.
Consider the evaluation sequence:
$$
0\rar N \rar H^0(L)\otimes \cO_{C'} \rar L\rar 0.
$$
Since $h^0(L)=2$ we see that $N\simeq L^{-1}$.
Tensor the above exact sequence by $i^*L$ and take its global sections. Since $L\neq i^*L$, we observe that $H^0(N\otimes i^*L)=0$ and hence $H^0(L)\otimes H^0(i^*L)\subset H^0(L\otimes i^*L)$. In particular,
$h^0(L\otimes i^*L)\geq 4$. Since $C'$ is non-hyperelliptic, by Clifford's theorem \cite[IV,5.4]{Arbarello},
$h^0(L\otimes i^*L)\leq 4$. Hence we obtain the equality $H^0(L)\otimes H^0(i^*L)\,=\, H^0(L\otimes i^*L)$.

Now, notice that the line bundle $L\otimes i^*L$ has degree $8$ on $C'$ and is invariant under $i$. Hence the product line bundle descends down to $C$ as a line bundle of degree $4$, call this line bundle $M$. In other words, $L\otimes i^*L\simeq f^*M$.
Consider the direct image
$$
f_\ast(\cO_{C'})= \cO_C\oplus \cL^{-1}.
$$
Hence, by projection formula, $f_*(L\otimes i^*L)= M\oplus (M\otimes \cL^{-1})$.
This gives a decomposition 
$$
H^0(C',L\otimes i^*L)=H^0(C,M)\oplus H^0(C,M\otimes \cL^{-1}).
$$
Moreover, we can identify the eigenspaces for the involution $i$ as follows:
\begin{equation}\label{eigen}
H^0(C',L\otimes i^*L)^+\,=\,H^0(C,M),\,\, H^0(C',L\otimes i^*L)^-=H^0(C,M\otimes \cL^{-1}).
\end{equation}
By Riemann-Roch applied to $M$ and $M\otimes \cL^{-1}$ on $C$, we get the dimension counts: $h^0(M)=3$ if $M=\omega_C$, otherwise $h^0(M)=2$. Furthermore, since $C$ is non-hyperelliptic 
\begin{equation}\label{clifford}
h^0(M\otimes \cL^{-1})= 0.
\end{equation} 
by Clifford's theorem and Riemann-Roch.
This implies that
\begin{equation}\label{equidim}
H^0(L)\otimes H^0(i^*L)=H^0(L\otimes i^*L)=H^0(f^*M)=H^0(M).
\end{equation}
The first equality in \eqref{equidim} implies that the $\pm$-eigenspaces for the involution $i$ are non-zero. This gives a contradiction to \eqref{eigen} and \eqref{clifford}.

\end{proof}

\begin{corollary}\label{generic}
The generic curve in $\cR_{3,2}$ is non-tetragonal.
\end{corollary}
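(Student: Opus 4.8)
The plan is to combine Lemma \ref{nontetragonal} with a short Riemann--Roch computation, after first verifying that its genericity hypotheses hold. The key point to keep in mind is that under the stated definition a hyperelliptic curve is itself tetragonal: if $h$ is its $g^1_2$, a general member of $|h+p+q|$ is already a complete $g^1_4$. So for a generic triple $(C,\cL,B)$ I must establish two things: first, that both $C$ and the double cover $C'$ are non-hyperelliptic, so that Lemma \ref{nontetragonal} applies and every $g^1_4$ on $C'$ is a pullback $f^*M$ with $\deg M=2$; and second, that no such pullback has $h^0=2$.

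For the non-hyperellipticity I would argue as follows. The forgetful map $\cR_{3,2}\rar \cM_3$, $(C,\cL,B)\mapsto C$, is dominant (any smooth genus $3$ curve carries degree $2$ bundles $\cL$ whose square has a reduced member in $|\cL^2|$), the hyperelliptic locus is a proper closed subset of $\cM_3$, and $\cR_{3,2}$ is irreducible; hence a generic $C$ is non-hyperelliptic. I would then deduce that $C'$ is non-hyperelliptic directly. Suppose not; since $g(C')=7\geq 2$ the $g^1_2$ of $C'$ is unique, hence preserved by the deck involution $i$, which therefore descends to an involution $\bar i$ of the target $\p^1$ of the degree $2$ map $\phi:C'\rar \p^1$. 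If $\bar i=\mathrm{id}$, then $i$ acts on a general two-point fibre either trivially or by the swap, so $i$ is the identity (absurd) or $i$ is the hyperelliptic involution, the latter forcing $C=C'/i\cong \p^1$ and contradicting $g(C)=3$. Thus $\bar i\neq \mathrm{id}$, and passing to quotients $\phi$ descends to a degree $2$ map $C=C'/i\rar \p^1/\bar i\cong \p^1$, making $C$ hyperelliptic and contradicting the previous paragraph. So $C'$ is non-hyperelliptic whenever $C$ is.

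With both curves non-hyperelliptic, Lemma \ref{nontetragonal} reduces the question to bounding $h^0(C',f^*M)$ for line bundles $M$ of degree $2$ on $C$. Using $f_*\cO_{C'}=\cO_C\oplus \cL^{-1}$ and the projection formula,
$$
h^0(C',f^*M)=h^0(C,M)+h^0(C,M\otimes \cL^{-1}).
$$
Since $C$ is non-hyperelliptic, $h^0(M)\leq 1$, while $M\otimes \cL^{-1}$ has degree $0$, so $h^0(M\otimes \cL^{-1})\leq 1$ with equality only if $M\cong \cL$. Here genericity of $\cL$ enters: the effective degree $2$ classes form the image of $C^{(2)}\rar \mathrm{Pic}^2(C)$, a surface inside the threefold $\mathrm{Pic}^2(C)$, so for a generic triple one has $h^0(\cL)=0$. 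Then if $M\not\cong \cL$ we get $h^0(f^*M)=h^0(M)\leq 1$, and if $M\cong \cL$ we get $h^0(f^*\cL)=h^0(\cL)+h^0(\cO_C)=0+1=1$; in either case $h^0(f^*M)<2$, so $f^*M\notin g^1_4$. Hence a generic $C'$ carries no $g^1_4$ and is non-tetragonal.

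The main obstacle is the middle step, ruling out hyperelliptic $C'$, precisely because under this definition a hyperelliptic curve is tetragonal and Lemma \ref{nontetragonal} says nothing in that case; the descent-of-involutions argument above is what I would rely on to force the contradiction. The remaining inputs, namely $h^0(\cL)=0$ for generic $\cL$ and the Riemann--Roch bookkeeping, are routine.
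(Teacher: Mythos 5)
Your proof is correct and follows the same route as the paper: Lemma \ref{nontetragonal} reduces every $g^1_4$ on $C'$ to a pullback $f^*M$ with $\deg M=2$, and the decomposition $h^0(f^*M)=h^0(C,M)+h^0(C,M\otimes\cL^{-1})$ rules these out for generic data. You are in fact more careful than the paper's own two-line argument, which silently assumes the non-hyperellipticity hypotheses of Lemma \ref{nontetragonal} and speaks of a ``generic'' $M$ even though $M$ is determined by the putative $g^1_4$ and so must be bounded for \emph{all} degree-two bundles; your descent-of-involution argument for $C'$ and the case split on $M\cong\cL$ (using $h^0(\cL)=0$ for a generic triple) close exactly these gaps.
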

\begin{proof}
By formula~\eqref{clifford} in the proof of Lemma \ref{nontetragonal}, the generic line bundle $M$ of degree $2$ on a generic curve of genus $3$ has no section. The eigenspace decomposition for the sections of the pullback bundle $L:=f^*M$ is given as
$$
H^0(C',L)=H^0(C,M)\oplus H^0(C,M\otimes \cL^{-1}).
$$ 
and which implies that the generic curve $(C',\cL,B)$ in $\cR_{3,2}$ is a non-tetragonal curve. 

\end{proof}

\subsection{Proof of Theorem \ref{birational}}
Consider the inclusion $\cR_{3,2}\subset \cM_7$ of moduli spaces.
Then we recall the classification of the singular loci of the moduli space $\cM_g$ done by Cornalba \cite{Cornalba2}. In particular, the curves with non-trivial automorphisms lie in the singular loci of $\cM_g$ and precisely form the singular loci. The maximal components of the singular loci are also described by him.
We recall his result when $g=7$ and for the embedding $\cR_{3,2}\subset \cM_7$, since it will be crucial for us. We note that any double cover $(C'\rar C)\in \cR_{3,2}$ corresponds to an involution $i$ on $C'$ with four fixed points, and having the quotient $C=C'/i$.

\begin{proposition}\label{cornalba2}
The singular loci $\cS\subset \cM_7$ consist of smooth curves with automorphisms. 
In particular the moduli space $\cR_{3,2}$ lies in the singular loci $\cS$ and furthermore it is a maximal component of $\cS$.
\end{proposition}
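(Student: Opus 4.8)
The plan is to establish the three assertions separately: that $\cS$ is precisely the locus of smooth curves carrying a nontrivial automorphism, that $\cR_{3,2}$ is contained in it, and that the generic curve of $\cR_{3,2}$ fills out a whole irreducible component of it. For the first assertion I would work with the local description of the coarse space $\cM_7$ near a point $[C]$, where the complete local ring is that of the quotient $\mathrm{Def}(C)/\mathrm{Aut}(C)$; here $\mathrm{Def}(C)=H^1(C,T_C)$ is the $18$-dimensional deformation space, Serre-dual to the space of quadratic differentials $H^0(C,\omega_C^{\otimes 2})$, on which $\mathrm{Aut}(C)$ acts linearly. By the Chevalley--Shephard--Todd theorem such a quotient is smooth at the image of the origin exactly when the acting finite group is generated by pseudo-reflections, i.e.\ elements $\sigma$ with $\mathrm{rank}(\sigma-\mathrm{id})=1$. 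The crux of Cornalba's analysis \cite{Cornalba2} is that for $g\geq 4$ no nontrivial curve automorphism acts as a pseudo-reflection on quadratic differentials: the most delicate case, the hyperelliptic involution, has a $(g-2)$-dimensional $(-1)$-eigenspace, which is a reflection only when $g=3$. Since $g=7$, this yields $\cS=\{[C]:\mathrm{Aut}(C)\neq 1\}$, with the easy converse that an automorphism-free curve is a smooth point.

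The second assertion is then immediate: a point of $\cR_{3,2}$ is a double cover $f:C'\rar C$ whose deck involution $i$ is a nontrivial element of $\mathrm{Aut}(C')$ with four fixed points, so $[C']\in\cS$. For the maximality I would stratify $\cS$ into equisymmetric strata, the locally closed irreducible loci of curves whose automorphism group acts with a fixed topological type; the irreducible components of $\cS$ are the closures of those strata along which the automorphism group does not grow, i.e.\ is generically the full group. A curve in $\cR_{3,2}$ carries a $\Z/2$-action with quotient of genus $h=3$ and $r=4$ branch points, whose stratum has dimension $3h-3+r=10=\dim\cR_{3,2}$; thus the natural map $\cR_{3,2}\rar\cM_7$ is generically finite onto this stratum.

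It then remains to verify (i) that the generic $C'$ is non-hyperelliptic, which is Corollary \ref{generic}, and (ii) that the generic $C'$ has $\mathrm{Aut}(C')=\langle i\rangle\cong\Z/2$. Granting (ii), the stratum is a maximal equisymmetric stratum, so its closure is an irreducible component of $\cS$; granting (i), this component is distinct from each of the other, larger $\Z/2$-strata --- the hyperelliptic one ($h=0$, dimension $13$), the bielliptic one ($h=1$, dimension $12$), and the $h=2$ stratum (dimension $11$), corresponding to the remaining solutions $r=16-4h$ of Riemann--Hurwitz --- since these are closed and the generic member of $\cR_{3,2}$, having only the involution $i$, admits no involution of a different numerical type. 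Hence $\cR_{3,2}$ is a maximal component of $\cS$.

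The hard part will be step (ii), ruling out extra automorphisms on the generic member. Here I would argue that the centralizer of $i$ in $\mathrm{Aut}(C')$ descends to automorphisms of $C=C'/i$ preserving the branch divisor $B$, and these are trivial for a generic genus-$3$ curve; meanwhile any automorphism not commuting with $i$ would endow $C'$ with a second involution of the same numerical type, hence a second double-cover structure, an incidence condition carving out a proper closed subset of $\cR_{3,2}$. A parameter count along these lines --- or a direct appeal to Cornalba's enumeration \cite{Cornalba2} of the maximal components of $\mathrm{Sing}(\cM_7)$ --- then pins the generic automorphism group down to $\Z/2$ and completes the argument.
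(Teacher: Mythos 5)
Your outline is correct, but it is worth knowing that the paper does not actually prove this proposition at all: its entire ``proof'' is the citation to \cite[Corollary 1, p.146 and p.150]{Cornalba2}, so what you have written is essentially a reconstruction of the content sitting behind that reference rather than an alternative to an argument in the text. The comparison is therefore this: the paper buys brevity by outsourcing everything to Cornalba, while you make visible the two genuinely distinct ingredients --- the Rauch--Popp--Oort description of $\mathrm{Sing}(\cM_g)$ for $g\geq 4$ via Chevalley--Shephard--Todd and the eigenvalue computation on $H^0(\omega_C^{\otimes 2})$ (your codimension count $3h-3+r/2$ for an involution is the right one, and correctly fails to be $1$ for $g=7$), and the equisymmetric stratification with the Riemann--Hurwitz solutions $r=16-4h$, giving strata of dimensions $13,12,11,10,9$ for $h=0,\dots,4$, of which the $(h,r)=(3,4)$ stratum is the image of $\cR_{3,2}$. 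Your dimension counts are right, and your observation that only the three larger strata need to be excluded (containment in the closure of a smaller one being impossible for dimension reasons, and each $(h,r)$-locus being closed in $\cM_7$ because the quotient genus is the locally constant dimension of the $+1$-eigenspace on $H^0(\omega_{C'})$) is sound. The one place your argument is not self-contained is exactly where you say it is: step (ii), that the generic member of $\cR_{3,2}$ has automorphism group exactly $\Z/2$. The centralizer half of your argument (descent to an automorphism of $(C,B)$, trivial for generic $C$ of genus $3$) is fine; the non-centralizing half, where a second conjugate involution forces a larger group and hence a lower-dimensional equisymmetric stratum, is plausible but is precisely the enumeration you end up deferring to Cornalba --- i.e., the same citation the paper makes globally. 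So your proof is a legitimate and more informative route, with one load-bearing step still resting on the reference.
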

\begin{proof}
See \cite[Corollary 1, p.146 and p.150]{Cornalba2}.
\end{proof}

Now, consider a generic point $(C'\sta{f}{\rar} C)=(C,B,\cL)\in \cR_{3,2}$. Then, by \cite[\S 2]{BCV}, we have a decomposition of the canonical space of $C'$:
\begin{equation}\label{eigenspaces}
H^0(C',\omega_{C'})= H^0(C,\omega_{C})\oplus H^0(C,\omega_{C}\otimes \cL).
\end{equation}
We can also interpret this decomposition for the involution $i$, which acts on the canonical space nontrivially.
Namely, we have a natural identification of the eigenspaces for $i$:
\begin{eqnarray*}
H^0(C',\omega_{C'})^+ & = & H^0(C,\omega_{C}) \\
H^0(C',\omega_{C'})^- & = & H^0(C,\omega_{C}\otimes \cL).
\end{eqnarray*}
Note that $\m{dim }H^0(C',\omega_{C'})^+=3$ and  $\m{dim }H^0(C',\omega_{C'})^-=4$.

We can now apply Theorem \ref{automorphism}, to the automorphism $i$ and conclude that $i$ lifts to an automorphism $i$ of $\p^{15}$ and leaves $X_{10}$ invariant. This gives an action of $i$ on the representation space $U_{16}$. Indeed, since $\m{Pic}(X_{10})\simeq \Z$, the ample line bundle $\cO_{X_{10}}(1)$ is invariant under $i$ and hence induces an action on its  sections which is precisely $U_{16}$.
Let us write the eigenspace decomposition of $U_{16}$ for the $i$-action:
\begin{equation}\label{pmeigenspace}
U_{16}\,=\,U^+ \oplus U^-.
\end{equation}

There are various possibilities for the dimensions of $U^+$ and $U^-$, which will
correspond to
\begin{equation}\label{dimension}
(\m{dim }U^+,\m{dim }U^-):=(r,16-r), \m{ for } 1\leq r \leq 15,
\end{equation}
since $i$ acts nontrivially.

We make the following observation first.
\begin{lemma}\label{invariant}
A point of the product variety $G(3,U^+)\times G(4,U^-)\subset G(7,U_{16})$ corresponds to a linear space
$\p^6\subset \p^{15}$, which is $i$ invariant. Furthermore, if $\p^6$ intersects
$X_{10}$ transversely then the intersection is a non-tetragonal curve with an involution and satisfying the decomposition \eqref{eigenspaces}.
\end{lemma}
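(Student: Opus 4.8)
The plan is to read off all the asserted properties from the eigenspace splitting $U_{16}=U^+\oplus U^-$ of \eqref{pmeigenspace}, using Mukai's classification \cite{Mukai3}, \cite{Mukai6} for smoothness and non-tetragonality and standard double-cover theory for the decomposition \eqref{eigenspaces}. First I would dispose of the easy half. A point of $G(3,U^+)\times G(4,U^-)$ is a pair $(W^+,W^-)$ with $W^+\subset U^+$ of dimension $3$ and $W^-\subset U^-$ of dimension $4$; setting $W:=W^+\oplus W^-\subset U_{16}$ gives a $7$-dimensional subspace, hence a $\p^6=\p(W)\subset\p^{15}$, and this is precisely the inclusion $G(3,U^+)\times G(4,U^-)\subset G(7,U_{16})$. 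Since $i$ acts as $+1$ on $U^+$ and as $-1$ on $U^-$, it preserves both $W^+$ and $W^-$, hence preserves $W$ as a subspace, so $\p^6$ is $i$-invariant.

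Next I would study the intersection $C':=\p^6\cap X_{10}$ under the transversality hypothesis. As $X_{10}$ is itself $i$-invariant (established just before \eqref{pmeigenspace}) and $\p^6$ is $i$-invariant, the involution $i$ restricts to an involution of $C'$. By Mukai's description a $\p^6$ meeting $X_{10}$ transversely cuts out a smooth, connected, canonically embedded curve of genus $7$, and every such linear section is non-tetragonal; thus $C'$ is a smooth non-tetragonal genus $7$ curve carrying the involution $i$ (cf. Theorem \ref{automorphism}).

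Then I would match the eigenspaces. Because $C'$ is a non-tetragonal, hence non-hyperelliptic, canonical curve spanning $\p^6$, it is projectively normal, so restriction of linear forms gives an $i$-equivariant isomorphism $W^*\cong H^0(C',\omega_{C'})$. Under the contragredient action, $(W^+)^*$ is the $(+1)$-eigenspace and $(W^-)^*$ the $(-1)$-eigenspace, whence
$$
\dim H^0(C',\omega_{C'})^+=3,\qquad \dim H^0(C',\omega_{C'})^-=4.
$$
Writing $C:=C'/i$ and $f:C'\rar C$ for the quotient double cover, the invariant $1$-forms are exactly the pullbacks from $C$, so $g(C)=\dim H^0(C',\omega_{C'})^+=3$. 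Riemann--Hurwitz then forces $f$ to be branched at exactly $2g(C')-2-2\bigl(2g(C)-2\bigr)=4$ points $B$, and with $\cL$ the line bundle defining the cover ($\cL^2=\cO(B)$) the identity $f_*\omega_{C'}=\omega_C\oplus(\omega_C\otimes\cL)$ reproduces the eigenspace decomposition \eqref{eigenspaces}. This exhibits $(C',i)$, equivalently $(C,B,\cL)$, as a point of $\cR_{3,2}$.

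The main obstacle I anticipate is the comparison in the last paragraph: one must check that the purely linear-algebraic splitting $W=W^+\oplus W^-$ is transported faithfully to the geometric double-cover decomposition, i.e. that the contragredient $i$-action on $H^0(C',\omega_{C'})$ agrees with the action induced by the geometric involution on $C'$, and that projective normality legitimizes the identification $W^*\cong H^0(C',\omega_{C'})$. Once the equivariance of this restriction isomorphism is pinned down, the genus and branch-point count follow automatically from Riemann--Hurwitz, and non-tetragonality is immediate from Mukai's theorem.
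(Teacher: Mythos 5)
Your proof is correct and follows the same route as the paper's: build $\p^6=\p(W^+\oplus W^-)$, observe $i$-invariance from the eigenspace splitting, invoke Mukai's classification for smoothness, genus and non-tetragonality of the transverse section, and read off \eqref{eigenspaces} from the $i$-action on the canonical space. You in fact supply more detail than the paper does (projective normality for $W^*\cong H^0(C',\omega_{C'})$, the genus-$3$ quotient via descent of invariant forms, and the Riemann--Hurwitz count of the four branch points), and the equivariance subtlety you flag at the end is real but is resolved by the normalization of the linear lift of $i$ already fixed when $U^+$ and $U^-$ were defined from a member of $\cR_{3,2}$, together with continuity of the eigenspace dimensions over the connected family of transverse sections.
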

\begin{proof}
We first note that a $3$-dimensional subspace $V^+\subset U^+$ and $4$-dimensional subspace $V^-\subset U^-$, gives a linear subspace $\p^6\subset \p^{15}$.
Clearly $\p(V^+\oplus V^-)\subset \p(U_{16})$ is a $\p^6$ and is invariant under the action of $i$. For the second assertion, note that $C'=\p^6\cap X_{10}$
also is an $i$-invariant subset and whenever the intersection is transverse, it correspond to a genus $7$ curve $C'$(by \cite{Mukai3}) with an involution, such that $\p^6$ is the canonical linear system of $C'$. This means that the $\pm$-eigenspaces of the canonical space of $C'$ are precisely $V^+$ and $V^-$. These data recover the decomposition in \eqref{eigenspaces}.
\end{proof}

\begin{lemma}\label{SO}
There is a subgroup $H\subset SO(10)$ such that $U^+$ and $U^-$ are $H$-representations. This induces an action of $H$ on $G(3,U^+)\times G(4,U^-)$ and which commutes with the action of $i$ such that the group quotient under this action is a birational model of $\cR_{3,2}$. In other words, we can write
$$
\cR_{3,2}\sim (G(3,U^+)\times G(4,U^-))/H.
$$
\end{lemma}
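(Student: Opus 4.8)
The plan is to realise $\cR_{3,2}$ as the space of $i$-invariant linear sections of $X_{10}$ modulo the automorphisms of $X_{10}$ that respect the involution, and then to take $H$ to be the corresponding subgroup of $SO(10)$. First I would fix the involution. Since $\cR_{3,2}$ is irreducible and forms a maximal component of the singular locus of $\cM_7$ (Proposition \ref{cornalba2}), the involution attached to a generic double cover $(C',\cL,B)$ extends, by Theorem \ref{automorphism}, to an automorphism of $X_{10}\subset\p^{15}$ that is well defined up to conjugacy; so I may fix one such involution $i$ acting on $U_{16}$ with eigenspace splitting $U_{16}=U^+\oplus U^-$ as in \eqref{pmeigenspace}. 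The canonical eigenspace computation \eqref{eigenspaces} shows that the $\pm$-eigenspaces $V^+,V^-$ of the canonical span of a member of $\cR_{3,2}$ have dimensions $3$ and $4$ and satisfy $V^+\subset U^+$, $V^-\subset U^-$; in particular $\dim U^+\geq 3$ and $\dim U^-\geq 4$, which constrains $r$ in \eqref{dimension}.

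By Lemma \ref{invariant} each point of $G(3,U^+)\times G(4,U^-)$ determines an $i$-invariant $\p^6=\p(V^+\oplus V^-)$, and when this $\p^6$ meets $X_{10}$ transversally the intersection is a non-tetragonal genus $7$ curve carrying the involution and realising \eqref{eigenspaces}; by Corollary \ref{generic} the generic point of $\cR_{3,2}$ is obtained in this way. This yields a dominant rational map
$$
\Phi: G(3,U^+)\times G(4,U^-)\lrar \cR_{3,2},\qquad (V^+,V^-)\mapsto \p(V^+\oplus V^-)\cap X_{10},
$$
the image being taken together with its induced involution. I would then define $H$ to be the subgroup of $SO(10)$ commuting with $i$, i.e.\ contained in the centraliser of the conjugation action of $i$ on $SO(10)$. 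Every $g\in H$ preserves the eigenspaces $U^\pm$ and hence acts on $G(3,U^+)\times G(4,U^-)$; since $g\in SO(10)$ leaves $X_{10}$ invariant and commutes with $i$, the curve-with-involution $\Phi(g\cdot(V^+,V^-))$ is isomorphic to $\Phi(V^+,V^-)$. Thus $\Phi$ is constant on $H$-orbits and factors through the quotient $(G(3,U^+)\times G(4,U^-))/H$.

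It remains to show that the induced map is birational, equivalently that a generic fibre of $\Phi$ is exactly one $H$-orbit. Given two pairs with the same image in $\cR_{3,2}$, an isomorphism of the underlying double covers induces an isomorphism $C_1'\cong C_2'$ compatible with the two involutions, which by Theorem \ref{automorphism} extends to an automorphism $\phi$ of $X_{10}$ with $\phi(P_1)=P_2$; conversely distinct $H$-orbits give non-isomorphic double covers. The crux, and the main obstacle, is to verify that such a $\phi$ is always realised by an element of $SO(10)$ and that its compatibility with the involutions forces $\phi i\phi^{-1}=i$, i.e.\ $\phi\in H$. This requires controlling $\mathrm{Aut}(X_{10})$, so that no outer automorphism of the $D_5$ diagram (interchanging the two spinor families) intervenes, and requires that it is the involution structure, not merely the abstract isomorphism class of the curve, which cuts the symmetry group down to the centraliser. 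The same difficulty is precisely what obstructs an explicit determination of $H$, which we therefore leave as Question \ref{question}.
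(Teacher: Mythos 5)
Your setup (realising double covers as $i$-invariant $\p^6$-sections of $X_{10}$, taking $H$ inside the centraliser of $i$ so that $U^\pm$ become $H$-modules, and checking that the section-taking map is constant on $H$-orbits) matches the paper's. But your proof does not close: you explicitly leave unproven the claim that a generic fibre of $\Phi$ is a single $H$-orbit, i.e.\ that the automorphism $\phi$ of $X_{10}$ supplied by Theorem \ref{automorphism} lies in $SO(10)$ and satisfies $\phi i\phi^{-1}=i$. Deferring this to Question \ref{question} is not available to you: that question concerns the \emph{explicit identification} of $H$ and of the representations $U^\pm$, not the existence of the birational equivalence, which is exactly what the lemma asserts. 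As written, your argument only produces a dominant rational map from the quotient onto $\cR_{3,2}$ whose generic finiteness-of-degree-one is the whole content of the statement.

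The paper closes this gap by a different, indirect route that you do not use. It runs the map in the opposite direction: since the generic member of $\cR_{3,2}$ is non-tetragonal (Corollary \ref{generic}), the restriction of Mukai's birational map $\cM_7\sim G(7,U_{16})/SO(10)$ to $\cR_{3,2}$ is a generically injective rational map $\psi'$ landing in $(G(3,U^+)\times G(4,U^-))/H$; then, to see that $\psi'$ is onto a dense subset, it observes that every transverse section coming from $G(3,U^+)\times G(4,U^-)$ carries a nontrivial involution and hence lies in the singular locus $\cS\subset\cM_7$, and invokes Proposition \ref{cornalba2} --- $\cR_{3,2}$ is a \emph{maximal} component of $\cS$ --- to conclude that the preimage must be dense in $\cR_{3,2}$. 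This maximality argument is the ingredient your proposal is missing; it is what allows the paper to avoid controlling $\mathrm{Aut}(X_{10})$ and the possible outer automorphism of the $D_5$ diagram, which is precisely where your direct orbit-counting approach stalls. You do cite Proposition \ref{cornalba2}, but only to normalise the involution at the outset, not where it actually does the work.
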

\begin{proof}
We note that by Mukai's classification \cite[\S 5]{Mukai3}, we have a birational isomorphism
$$
\cM_7  \sim  G(7,U^{16})/SO(10).
$$
The product subvariety $G(3,U^+)\times G(4,U^-)\subset G(7,U^{16})$ is not acted by $SO(10)$ but by an algebraic subgroup $H \subseteq G$. 
To describe the action of $H$, we first note that the involution $i$ commutes with the action of $H$, so that the quotient 
$(G(3,U^+)\times G(4,U^-))/H$ gives the isomorphism classes of smooth curves with an involution $i$.
Then the matrices in $SO(10)$ which act on the product subvariety are those which commute with the involution $i$ on a linear space $\p(U_{16})$.

As noted in \eqref{pmeigenspace}, we have an eigenspace decomposition
$$
U_{16}=U^+\oplus U^-
$$
for the action of $i$.
Since for any $h\in H$ and $s\in U^+$ (resp. $s\in U^-$)
$$
i.h(s)=h.i(s)=h(s)
$$
it follows that $U^+$ (resp. $U^-$) are (projective) $H$-modules.

By Corollary \ref{generic}, we know that a generic curve $C'\in \cR_{3,2}$ is non-tetragonal. Hence, the moduli space
$\cR_{3,2}$ does not lie in the indeterminacy locus of the birational map
$$
\cM_7  \rar  G(7,U^{16})/SO(10).
$$
Hence this birational map restricts to a generically injective rational map
$$
\cR_{3,2}\sta{\psi} {\rar}  G(7,U^{16})/SO(10).
$$
Corresponding to a non-tetragonal curve $(C'\rar C)\in \cR_{3,2}$ (which is the generic situation, by Corollary \ref{generic}) we can associate a point in $G(3,U^+)\times G(4,U^-)$ according to the decomposition of the canonical space in \eqref{eigenspaces}.
Hence the image of $\psi$ maps to the product space
$$
\cR_{3,2}\sta{\psi'}{\rar} (G(3,U^+)\times G(4,U^-))/H,
$$
and this map is generically injective.

To see that $\psi'$ is birational, given a generic point
$\p^6 \in G(3,U^+)\times G(4,U^-)$ we first know by \cite{Mukai3} that the intersection
$C'=\p^6\cap X_{10}$ lies in $\cM_7$. Now by Proposition \ref{cornalba2}, $C'$ lies in the singular locus $\cS\subset \cM_7$, since it has a nontrivial involution. This implies that the inverse image of  $(G(3,U^+)\times G(4,U^-))/H$ under $\psi$
in $\cM_7$ is a subset in the singular locus $\cS\subset \cM_7$ and containing a dense open subset of $\cR_{3,2}$.
But again by Proposition \ref{cornalba2} since $\cR_{3,2}$ is a maximal component in $\cS$, the inverse image has to be dense in $\cR_{3,2}$.

This proves the birational equivalence
\begin{equation}
\cR_{3,2}\sim (G(3,U^+)\times G(4,U^-))/H.
\end{equation}
\end{proof}

\begin{corollary}
The moduli space $\cR_{3,2}$ is a unirational variety.
\end{corollary}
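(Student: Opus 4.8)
The plan is to read off unirationality directly from the birational model established in Theorem \ref{birational}, using the elementary fact that a dominant image of a rational variety is unirational. First I would invoke Theorem \ref{birational} to write
$$
\cR_{3,2}\sim (G(3,U^+)\times G(4,U^-))/H,
$$
so that, since unirationality is a birational invariant, it suffices to prove that the quotient on the right-hand side is unirational.

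Next I would observe that the product $V:=G(3,U^+)\times G(4,U^-)$ is a rational variety. Indeed, every Grassmannian $G(k,W)$ is rational, being birational to affine space via its big Schubert cell, and a product of rational varieties is again rational. In particular there is a dominant rational map $\p^N\dashrightarrow V$ for a suitable $N$.

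Finally, the natural quotient map $\pi:V\dashrightarrow V/H$ is dominant, since it is generically surjective onto $V/H$. Composing $\pi$ with the dominant map $\p^N\dashrightarrow V$ produces a dominant rational map $\p^N\dashrightarrow V/H$, which is exactly the assertion that $V/H$ is unirational. Combined with the birational equivalence above, this shows that $\cR_{3,2}$ is unirational.

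The argument requires no new geometric input beyond Theorem \ref{birational}; its entire content is already packaged there. The only point deserving a word of care is that we do not need to identify the group $H$ explicitly (its precise description is posed as an open question): for unirationality it is enough that the quotient map $\pi$ be dominant, which holds for the quotient by \emph{any} algebraic group action. Consequently there is no genuine obstacle in this step — once the homogeneous birational model is in hand, unirationality follows immediately.
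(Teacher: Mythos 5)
Your argument is correct and is essentially identical to the paper's own proof: both invoke the birational model of Theorem \ref{birational}, note that a product of Grassmannians is rational, and conclude that the dominant image under the quotient map is unirational. You merely spell out the dominance of the quotient map and the birational invariance of unirationality, which the paper leaves implicit.
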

\begin{proof}
Since a Grassmannian variety is a rational variety, it follows that the product space $G(3,U^+)\times G(4,U^-)$ is also
a rational variety. Using the description in \eqref{birat}, it follows that the moduli space $\cR_{3,2}$ is a unirational
variety.
\end{proof}

The birational model in \eqref{birat} should also be compatible with the projection $\cR_{3,2}\rar \cM_3$. We have been unable to determine $H$ explicitly and
we pose the following question:

\begin{Question}\label{question}:
 Determine the subgroup $H$ and the $H$-(projective) representations $U^+$ and $U^-$ explicitly.
\end{Question}

Notice that we have the spinor representation
$$
\phi(10):Spin(10)\rar Aut(U_{16})
$$
which gives the $SO(10)=\f{Spin(10)}{\pm 1}$-action on $P(U_{16})$, considered in \cite{Mukai3}.
It may be possible to study further via the spinor representation restricted to the various subgroups of $S)(10)$. 

Recall that the Spin group $Spin(2n)$ has two inequivalent irreducible
spinor representations of dimension $2^{n-1}$, denoted by $U^{\pm}_{2^{n-1}}$ and the Spin group $Spin(2n+1)$
has one irreducible spinor representation of dimension $2^n$, denoted by $U_{2^n}$, for example see \cite[\S 20.2 and Exercise 20.40,p.311]{Fulton} for these facts.
The above spinor representation $\phi(10)$
restricts on $Spin(8)$ to the automorphisms of $U_8^+\oplus U_8^-$; the sum of the inequivalent 
two irreducible spinor representations of $Spin(8)$ of dimension $8$. We know by previous Lemma \ref{SO} that $U^+,U^-$ are $H$-modules.
If $H=SO(8)$ then $U^+= U_8^+ \m{ or }U_8^-$ and $U^-= U_8^- \m{ or }U_8^+$.
But $\m{dim }SO(8)$ is $28$. Hence $\m{dim }\cR_{3,2}$ is not birationally equivalent to
$(G(3,U^+)\times G(4,U^-))/SO(8)$. Hence $H\neq SO(8)$.

The spinor representation $\phi(10)$ restricts to
two copies of the spinor representation $U_{8}$ of $Spin(7)$ of dimension $8$.

If $H=SO(7)$ then $U^+=U_8,U^-=U_8$ and $SO(7)$ acts on
$G(3,U^+)\times G(4,U^-)$. Since $\m{dim }SO(7)$ is $21$, we have dimension of $(G(3,U^+)\times G(4,U^-))/H$
equal to $10$ which is the same as $\m{dim }\cR_{3,2}=10$.

Similarly, $SO(6)$ has two inequivalent spinor representations $U_{4}^+$ and
$U_{4}^-$, and $SO(4)$ has two inequivalent spinor representations $U^+_2$ and $U^-_2$.
If $H=SO(4)\times SO(6)$ then $U^\pm=U_2^\pm\otimes U_4^\pm$.
Since dimension of $SO(4)\times SO(6)$ is $6+15=21$, we again get equality of the dimensions of $(G(3,U^+)\times G(4,U^-))/H$ and that of $\cR_{3,2}$.

Of course, the above discussion gives only some possibilities and is not really a proof. 

We leave it to the reader to continue
this interesting discussion.

In the next section we will investigate the question of rationality of $\cR_{3,2}$. Since we have been unable to
describe the subgroup $H$ and the $H$-subspaces $U^+$ and $U^-$ explicitly, we will look for another description of $\cR_{3,2}$ which we hope will lead to an answer on the rationality question.

%%%%%%%%%%%%%%%%%%%%%%%%%%%%%%%%%%%%%%%%%%%%%%%%%%%%%%%%%%%%%%%%%%%%%%%%%%%%%%%

\section{Rationality of the moduli space $\cR_{3,2}$}

%%%%%%%%%%%%%%%%%%%%%%%%%%%%%%%%%%%%%%%%%%%%%%%%%%%%%%%%%%%%%%%%%%%%%%%%%%%%%%%%%%%

In this section, we will prove the rationality of the moduli space $\cR_{3,2}$, via another description and using known
results on rationality of moduli space of curves of genus $g$ with $n$ marked points $\cM_{g,n}$, for small $g$ and $n$.
Recall that rationality of moduli spaces of curves with marked points has attracted wide interest and we list some recent results by  Katsylo, Dolgachev, Casnati-Fontanari (\cite{Katsylo}, \cite{Dolgachev}, \cite{Casnati}).
Rationality of some moduli spaces of double covers have also been obtained by Bardelli-Del Centina, Izadi-Lo Giudice-Sankaran (\cite{B-dC}, \cite{Izadi}).
To our knowledge the moduli space $\cR_{g,b}$, for $b>0$, introduced in \cite{BCV} have not been looked into.
We illustrate the case when $g=3$ and $b=2$ and expect that the results can be extended to some other cases as well.

Our main observation is the following:

\begin{lemma}\label{rationalJ}
The moduli space $\cR_{3,2}$ is birational to a $\p^1$-bundle over the universal Picard scheme $\m{Pic}^2_{\cM_3}$ which parametrises degree $2$ line bundles, over (an open subset of) the moduli space $\cM_3$.
\end{lemma}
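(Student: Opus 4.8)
The plan is to parametrize the data $(C, \cL, B)$ defining a point of $\cR_{3,2}$ directly and to peel off the choice of $B$ as the fiber of a projective bundle. A point of $\cR_{3,2}$ is an isomorphism class of triples $(C, \cL, B)$, where $C$ is a smooth genus $3$ curve, $\cL$ is a degree $2$ line bundle on $C$, and $B \in |\cL^{\otimes 2}|$ is an effective divisor of $4$ distinct points. The pair $(C, \cL)$ is exactly a point of the universal Picard scheme $\m{Pic}^2_{\cM_3}$, so the forgetful map
$$
\rho : \cR_{3,2} \lrar \m{Pic}^2_{\cM_3}, \qquad (C, \cL, B) \mapsto (C, \cL),
$$
is the natural candidate for the bundle projection. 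The fiber over a generic $(C, \cL)$ is the set of admissible divisors $B$, and the first step is to identify this fiber with an open subset of the linear system $|\cL^{\otimes 2}|$.

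The key computation is the dimension of $|\cL^{\otimes 2}|$. For a generic $(C, \cL) \in \m{Pic}^2_{\cM_3}$, the line bundle $\cL^{\otimes 2}$ has degree $4$ on the genus $3$ curve $C$, so by Riemann--Roch $h^0(\cL^{\otimes 2}) = 4 - 3 + 1 + h^1(\cL^{\otimes 2}) = 2 + h^1(\cL^{\otimes 2})$. For a generic choice the special term vanishes (one checks, as in formula \eqref{clifford}, that a generic degree $2$ bundle $\cL$ is nonspecial and that $\cL^{\otimes 2}$ of degree $4$ has no extra sections beyond the Riemann--Roch minimum), giving $h^0(\cL^{\otimes 2}) = 2$ and hence $\dim |\cL^{\otimes 2}| = 1$. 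Thus the generic fiber of $\rho$ is an open subset of $\p^1$, namely the locus of divisors $B$ consisting of $4$ distinct points; the non-reduced or non-distinct divisors form a proper closed subset and are discarded birationally. This matches the dimension count $\dim \cR_{3,2} = 10 = \dim \m{Pic}^2_{\cM_3} + 1$, since $\dim \m{Pic}^2_{\cM_3} = \dim \cM_3 + 3 = 6 + 3 = 9$.

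To promote this fiberwise statement to a birational $\p^1$-bundle structure, I would work over an open subset $\cU \subset \m{Pic}^2_{\cM_3}$ on which the universal line bundle exists (after an \'etale base change if necessary, to rigidify and kill automorphisms) and form the relative second power of the universal bundle; its relative $H^0$ is a rank $2$ vector bundle over $\cU$ by cohomology and base change, since $h^0(\cL^{\otimes 2}) = 2$ is constant on the generic locus. Projectivizing this rank $2$ bundle yields a genuine $\p^1$-bundle over $\cU$ whose total space maps birationally to $\cR_{3,2}$, with $B$ recovered as the divisor cut out by the corresponding section.

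The main obstacle is the passage from the coarse moduli space $\m{Pic}^2_{\cM_3}$ to an honest universal line bundle: a Poincar\'e bundle need not exist globally because of automorphisms and the stacky nature of $\cM_3$. Since the statement is only birational, it suffices to produce the universal object over a dense open subset, which one can arrange by rigidifying with level structure or by restricting to the locus of curves with trivial automorphism group (generic in $\cM_3$) and checking that the resulting constructions descend; I expect this to be routine but technical. The constancy of $h^0(\cL^{\otimes 2}) = 2$ on the generic locus is what guarantees the relative $H^0$ is locally free of rank $2$, so that its projectivization is a bona fide $\p^1$-bundle, and verifying this constancy is the computational heart of the argument.
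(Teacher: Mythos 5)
Your proposal is correct and follows essentially the same route as the paper: identify the fibre of the forgetful map $(C,\cL,B)\mapsto(C,\cL)$ with (an open subset of) the pencil $|\cL^{\otimes 2}|$ via the Riemann--Roch computation $h^0(\cL^{\otimes 2})=2$ for generic $(C,\cL)$, and globalize by projectivizing the rank-two pushforward of the square of a Poincar\'e bundle. The only difference is the device for producing a Poincar\'e bundle: the paper passes to $\m{Pic}^2_{\cM_{3,1}}$, where the marked point gives a section of the universal curve, and then pushes forward to $\m{Pic}^2_{\cM_3}$, whereas you propose rigidification or \'etale base change --- either suffices for a birational statement.
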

\begin{proof}
Recall that $\cR_{3,2}$ parametrises triples $(C,L,B)$ of data: $C$ is a connected smooth projective curve of genus $3$,
$L$ is a line bundle of degree $2$ on $C$ and $B$ is a general divisor (consisting of distinct points) in the complete linear system $|L^2|$. Let $\cC\rar \cM_3$ and $\cJ\rar \cM_3$ denote the universal curve and universal Jacobian, and which exist over some open subset of the moduli space $\cM_3$. Consider the universal Picard variety
$\m{Pic}^2_{\cM_3}\rar \cM_3$. This family  parametrises line bundles of degree $2$ over a curve $C\in \cM_3$.

In particular the variety $\m{Pic}^2_{\cM_3}$ is the moduli space of pairs $(C,L)$ of the following data: $C$ is a connected smooth projective curve of genus $3$ and $L$ is a line bundle of degree $2$ on $C$. Since there is no universal Poincar\'e line bundle $\cL\rar \cC\times_{\cM_3} \m{Pic}^2_{\cM_3}$, we consider the universal Poincar\'e line bundle $\cL\rar \cC\times_{\cM_{3,1}} \m{Pic}^2_{\cM_{3,1}}$. Here $\cM_{3,1}$ denotes the moduli space of genus 3 curves with one marked point and $\cC\rar \cM_{3,1}$ is the universal curve with a section.    Consider the projections
$$
\xymatrix{
 & \cC \times_{\cM_{3,1}}  \m{Pic}^2_{\cM_{3,1}} \ar@{->}[dl] \ar@{->}[dr] & \\
 \cC &   &  \m{Pic}^2_{\cM_{3,1}}                                   }
$$
which are denoted by $p$ and $q$ respectively.

 Now look at the map
$$
h:\m{Pic}^2_{\cM_{3,1}}\rar \m{Pic}^2_{\cM_{3}}. 
$$
This is the same as the pullback of $\m{Pic}^2_{\cM_{3}}\rar \cM_3$ via the morphism given by forgetting the marked point:
$$
\cM_{3,1}\rar \cM_3.
$$
Consider the direct image sheaf $\cF:= (h_*\circ q_*)\cL^2$ on $\m{Pic}^2_{\cM_{3}}$.
The fibres of the sheaf $\cF$ are $H^0(C\times C,p_1^*L^2)=H^0(C,L^2)$, where $p_1:C\times C\rar C$ is the first projection.
 By Riemann-Roch theorem, there is an open subset $U\subset \m{Pic}^2_{\cM_3}$ such that the fibres of the sheaf $\cF$ are equi-dimensional and have dimension equal to $2$. Hence, by semi-continuity, $\cF$ forms a vector bundle over $U$ and denote its dual by $\cF^*$ over $U$.
Consider the projectivization $\cG:=\p(\cF^*)\rar \m{Pic}^2_{\cM_3}$. Then this is a $\p^1$-bundle over the open subset $U$ of $ \m{Pic}^2_{\cM_3}$ whose fibres are identified with the linear system $|L^2|$. Hence $\cG$ parametrises triples $(C,L,B)$ such that $B\in |L^2|$. Consider the open subset $U'$ of $\cG$ such that the points of $U'$ correspond to triples $(C,L,B)$ and the points in $B$ are distinct. Then $U'$ is precisely the moduli space $\cR_{3,2}$.
In other words, $\cR_{3,2}$ is birationally isomorphic to $\cG$.
\end{proof}

\begin{corollary}\label{rationalF}
Suppose the universal Picard scheme $\m{Pic}^2_{\cM_3}$ is a rational variety. Then the variety $\cG$ is also a rational variety.
\end{corollary}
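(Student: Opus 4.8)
The plan is to read off rationality of $\cG$ directly from its structure as a projective bundle, as supplied by Lemma \ref{rationalJ}. Recall from that lemma that $\cG=\p(\cF^*)$ is a $\p^1$-bundle over an open subset $U\subset \m{Pic}^2_{\cM_3}$, where $\cF^*$ is the dual of the rank $2$ vector bundle $\cF$. The essential fact I would use is elementary: the projectivization of a vector bundle is always Zariski-locally trivial, so over any open set on which $\cF^*$ is a trivial bundle the restriction of $\p(\cF^*)$ is simply a product with $\p^1$.

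First I would choose a dense open subset $V\subseteq U$ over which $\cF^*\cong \cO_V^{\oplus 2}$; such a $V$ exists because a vector bundle trivializes over a dense open set. On $V$ we then obtain an isomorphism $\p(\cF^*)|_V\cong V\times \p^1$, and this is a dense open subvariety of $\cG$. Consequently $\cG$ is birational to $V\times \p^1$, and since $V$ is a dense open subset of $\m{Pic}^2_{\cM_3}$ it is birational to $\m{Pic}^2_{\cM_3}$ itself. Thus $\cG$ is birational to $\m{Pic}^2_{\cM_3}\times \p^1$.

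Now I would invoke the hypothesis. If $\m{Pic}^2_{\cM_3}$ is rational, say of dimension $d$, then it is birational to $\p^d$, whence $\cG$ is birational to $\p^d\times \p^1$. Finally, the product $\p^d\times \p^1$ is rational, since it contains the dense open affine subset $\A^d\times \A^1\cong \A^{d+1}$ and is therefore birational to $\p^{d+1}$. Hence $\cG$ is rational, as claimed.

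There is really no serious obstacle here: the statement is a formal consequence of Lemma \ref{rationalJ} together with the standard fact that a projective bundle over a rational base is again rational. The only point requiring any care is the passage from the words ``$\p^1$-bundle'' to an actual product decomposition over a dense open set, which rests on the Zariski-local triviality of projectivized vector bundles. Once that is in place, rationality of $\cG$ follows at once from rationality of the base, and no geometric input specific to $\cR_{3,2}$ is needed beyond Lemma \ref{rationalJ} already established.
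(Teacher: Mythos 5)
Your argument is correct and is essentially the paper's own proof: the authors likewise deduce rationality of $\cG$ from the fact that it is the projectivization of a rank two vector bundle over an open subset of $\m{Pic}^2_{\cM_3}$, hence Zariski-locally trivial and birational to $\m{Pic}^2_{\cM_3}\times\p^1$. You have merely spelled out the local triviality step that the paper leaves implicit.
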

\begin{proof}
Above we showed over an open subset of $\m{Pic}^2_{\cM_3}$ that $\cG$ is a $\p^1$-bundle  which is the projectivisation of a rank two vector bundle. 
This implies that $\cG$ is rational.
\end{proof}

\begin{corollary}\label{doublerational}
The moduli space $\cR_{3,2}$ is a rational variety, if $\m{Pic}^2_{\cM_3}$ is rational.
\end{corollary}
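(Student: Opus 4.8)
The plan is to obtain the statement as a formal consequence of Lemma \ref{rationalJ} and Corollary \ref{rationalF}, the only additional ingredient being that rationality is a birational invariant. Since essentially all of the geometric content has already been established in the construction of the $\p^1$-bundle $\cG = \p(\cF^*) \rar \m{Pic}^2_{\cM_3}$, the argument here is short and amounts to assembling the implications in the correct order.

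First I would assume the hypothesis, namely that the universal Picard scheme $\m{Pic}^2_{\cM_3}$ is a rational variety. Corollary \ref{rationalF} then gives directly that $\cG$ is rational. Next I would invoke Lemma \ref{rationalJ}, which asserts that $\cR_{3,2}$ is birationally isomorphic to $\cG$; concretely, the dense open subset $U' \subset \cG$ parametrising triples $(C,L,B)$ with $B$ reduced is identified with $\cR_{3,2}$. Finally, since two birationally isomorphic irreducible varieties have isomorphic function fields, and since a variety is rational exactly when its function field is a purely transcendental extension of $\comx$, the rationality of $\cG$ transfers to $\cR_{3,2}$. This yields the desired conclusion.

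The only point meriting attention is the birational triviality of the projective bundle built in Corollary \ref{rationalF}: one should note that the projectivisation of the rank two bundle $\cF^*$ over a rational base is birational to the product of that base with $\p^1$, because $\cF^*$ trivialises over a dense open subset, whence $\cG$ is birational to $U \times \p^1$ of dimension $\m{dim}\,\cR_{3,2}=10$. This places no real obstacle in the way, and indeed the genuine difficulty has been displaced entirely onto the hypothesis, the rationality of $\m{Pic}^2_{\cM_3}$ itself, which the paper leaves open and discusses in Remark \ref{remverra}.
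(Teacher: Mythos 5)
Your argument is correct and is essentially identical to the paper's own proof: the paper likewise deduces the corollary by combining the birational isomorphism $\cR_{3,2}\sim \cG$ from Lemma \ref{rationalJ} with the rationality of $\cG$ established in Corollary \ref{rationalF}. Your additional remark on the birational triviality of $\p(\cF^*)$ over a dense open subset is exactly the content already supplied by Corollary \ref{rationalF}, so nothing new is needed.
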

\begin{proof}
This follows from the birational isomorphism $\cR_{3,2}\sim \cG$ shown in the proof of Lemma \ref{rationalJ} and using
the rationality of $\cG$ shown in Corollary \ref{rationalF}.
\end{proof}

\begin{remark}\label{remverra}
It is mentioned by Verra in \cite[Introduction]{Verra2} that the universal abelian variety over $\cM_3$ is rational, using the results in \cite{Casnati}. Although we do not have a proof of this, Verra \cite{Verra3} has communicated to us that this is highly probable. 
\end{remark}

\begin{remark} For other values $g=4,5,6,8,10$ and some small values of $b$ depending on $g$, 
similar arguments and proof are likely to prove the rationality of $\cR_{g,b}$. This may follow from the rationality results for moduli spaces
of pointed curves in \cite{Casnati, BCF}.
\end{remark}

%%%%%%%%%%%%%%%%%%%%%%%%%%%%%%%%%%%%%%%%%%%%%%%%%%%%%%%%%%%%%%%%%%%%%%%%%%%%%%%%%%%%%%%%%%%%%%%%%%

\section{Chow--K\"unneth decomposition for an open subset of $\cR_{3,2}$}
%%%%%%%%%%%%%%%%%%%%%%%%%%%%%%%%%%%%%%%%%%%%%%%%%%%%%%%%%%%%%%%%%%%%%%%%%%%%%%%%%%%%%%%%%%%%%%%%%%%%%

In this section, we want to conclude that there is an open subset of $\cR_{3,2}$  which has a Chow--K\"unneth decomposition. See similar results in \cite{Iy-Mu} for open subsets of moduli space of curves of small genus $g\leq 8$.
Recall that this was proved in \cite{Iy-Mu}, via realizing the open subsets as group quotients of open subsets in homogeneous spaces. The key point used was that the homogeneous spaces have only algebraic cohomology and hence orthogonal projectors equivariant for the group action could be constructed. All those results could also be applied to the variety $\cR_{3,2}$.

\begin{corollary}\label{CKopen}
There is an open subset of the moduli space $\cR_{3,2}$ which admits a Chow--K\"unneth decomposition in the sense of definition \ref{CK-def}.
\end{corollary}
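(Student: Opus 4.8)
The plan is to transfer the Chow--K\"unneth decomposition from the explicit birational model furnished by Theorem \ref{birational} down to $\cR_{3,2}$ itself. By that theorem we have a birational equivalence
$$
\cR_{3,2}\sim (G(3,U^+)\times G(4,U^-))/H,
$$
so there is a common open subset $W$ sitting inside both $\cR_{3,2}$ and the quotient $Q:=(G(3,U^+)\times G(4,U^-))/H$. The strategy is exactly the one used in \cite{Iy-Mu} for the open subsets of $\cM_g$: first construct the decomposition on the homogeneous side, then descend it through the $H$-quotient, and finally transport it along the birational map. The decisive input, as emphasized right after the first theorem of the paper, is that the relevant homogeneous spaces have only algebraic cohomology, which makes Lemma \ref{simpleprojectors} applicable.

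First I would work on the product $P:=G(3,U^+)\times G(4,U^-)$. Each Grassmannian has cohomology generated by Schubert classes, hence carries only algebraic cohomology and no odd cohomology; by the K\"unneth formula the same holds for the product $P$. Applying Lemma \ref{simpleprojectors} with $H^*(\cA)=H^*(P,\Q)$ produces an honest (and orthogonal) Chow--K\"unneth decomposition of $P$. Because $H$ is an algebraic subgroup of $SO(10)$ acting on $P$ and this action commutes with the involution $i$ (Lemma \ref{SO}), the second step is to make these projectors equivariant for the $H$-action, using the equivariant Chow motive formalism developed in the prequel \cite{Iy-Mu}: one averages or, more precisely, one invokes the ring isomorphism between the equivariant Chow groups and the Chow groups of a Borel-type approximation so that the projectors descend to the quotient $Q$ and give orthogonal projectors in $CH^*(Q\times Q)_\Q$ over an open locus where the quotient is well behaved.

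The third step is to transport the decomposition across the birational equivalence $\cR_{3,2}\sim Q$. Concretely, one restricts the projectors to the common open subset $W$ where the birational map $\psi'$ of Lemma \ref{SO} is an isomorphism; since $W$ is a smooth DM-stack with quasi-projective coarse moduli space, the restricted cycles give K\"unneth projectors on $W$ in the sense of the first half of Definition \ref{CK-def}. To upgrade these to a genuine Chow--K\"unneth decomposition one must exhibit a smooth compactification $W\subset\ov W$ to which the projectors extend as orthogonal projectors; here I would take closures of the defining cycles in a smooth DM-compactification and use that the generators coming from Schubert classes extend, exactly as in the proof of the stable-cohomology corollary earlier in the text. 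Invoking Definition \ref{CK-def}, this yields a Chow--K\"unneth decomposition on an open subset of $\cR_{3,2}$, which is the assertion.

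The main obstacle, I expect, is the descent through the quotient by $H$: since $H$ has not been determined explicitly (this is precisely Question \ref{question}), one must argue purely formally that the $H$-invariant projectors on $P$ produce orthogonal projectors on the quotient over a suitable open subset, without knowing whether the $H$-action is free or what the stabilizers look like. The equivariant machinery of \cite{Iy-Mu} is designed to handle exactly this, so the cleanest route is to cite those equivariant-projector results verbatim and apply them to the $H$-action on $P=G(3,U^+)\times G(4,U^-)$; the only genuine checks are that $P$ has purely algebraic cohomology (immediate from Schubert calculus) and that the $H$-action commutes with $i$ (already recorded in Lemma \ref{SO}).
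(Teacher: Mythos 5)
Your proposal follows the paper's own argument essentially verbatim: identify an open subset of $\cR_{3,2}$ with an open subset of $(G(3,U^+)\times G(4,U^-))/H$ via Theorem \ref{birational}, use that the product of Grassmannians has purely algebraic cohomology to apply Lemma \ref{simpleprojectors}, lift the resulting orthogonal projectors to the $H$-equivariant Chow group via the machinery of \cite{Iy-Mu}, and descend to the open subset. The paper handles the final step by citing the analogue of \cite[Corollary 5.9]{Iy-Mu} (equivariant projectors giving projectors on the bottom weight cohomology of the quotient) rather than your compactification sketch, but the two are the same argument at the same level of detail.
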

\begin{proof}
We use Lemma \ref{SO} and the birational equivalence
\begin{equation}\label{birat}
\cR_{3,2}\sim (G(3,U^+)\times G(4,U^-))/H.
\end{equation}
to conclude that there is an open subset $U\subset \cR_{3,2}$ which is isomorphic to an open subset $U'$ of a group quotient of the homogeneous space $ G(3,U^+)\times G(4,U^-)$. Since the product of Grassmannian varieties has only algebraic cohomology, it has a Chow--K\"unneth decomposition, by Proposition \ref{simpleprojectors}. The orthogonal projectors for $G(3,U^+)\times G(4,U^-)$ can be lifted in the rational equivariant Chow group of the product of Grassmannians, for the action of the group $H$ (see \cite[Lemma 5.2]{Iy-Mu}). These (equivariant) orthogonal projectors correspond to orthogonal projectors for the bottom weight cohomology for $U'$ (the proof is similar to \cite[Corollary 5.9]{Iy-Mu} and we do not repeat them here). This precisely gives a Chow--K\"unneth decomposition for $U'$. 

\end{proof}

 %%%%%%%%%%%%%%%%%%%%%%%%%%%%%%%%%%%%%%%%%%%%%%%%%%%%%%%%%%%%%%%%%%%%%%%%%%%%%%%%%%%%%%%%%%%%%%%

%%%%%%%%%%%%%%%%%%%%%%%%%%%%%%%%%%%%%%%%%%%%%%%%%%%%%%%%%%%%

\end{document}